\setlist[itemize]{nolistsep}
\newtheorem{thm}{Theorem}
\newtheorem{lem}[thm]{Lemma}
\newtheorem{cor}[thm]{Corollary}
\newtheorem{conj}[thm]{Conjecture}
\newtheorem{prop}[thm]{Proposition}
\theoremstyle{definition}
\newtheorem{exmp}[thm]{Example}
\newtheorem{qn}{Question}
\theoremstyle{remark}
\DeclareMathOperator{\ssl}{\mathfrak{sl}}
\DeclareMathOperator{\Rest}{Res}
\DeclareMathOperator{\Rea}{Res^{\mathfrak{sl}_{r+1}}_{\mathfrak{sl}_{r}}}
\DeclareMathOperator{\Res}{Res^{G_2}_{A_2}}
\DeclareMathOperator{\Ress}{Res^{B_2}_{D_2}}
\DeclareMathOperator{\Resss}{Res^{B_3}_{D_3}}
\DeclareMathOperator{\Ree}{Res^{F_4}_{D_4}}
\DeclareMathOperator{\Rees}{Res^{F_4}_{B_4}}
\DeclareMathOperator{\Reese}{Res^{B_4}_{D_4}}
\DeclareMathOperator{\nores}{Res^{C_3}_{(A_1)^{\oplus 3}}}
\DeclareMathOperator{\mg}{\mathfrak{g}_2}
\DeclareMathOperator{\su}{\mathfrak{su}}
\DeclareMathOperator{\SU}{SU}
\DeclareMathOperator{\UU}{U}
\DeclareMathOperator{\OO}{O}
\DeclareMathOperator{\SP}{Sp}
\DeclareMathOperator{\so}{\mathfrak{so}}
\DeclareMathOperator{\g}{\mathfrak{g}}
\DeclareMathOperator{\lt}{\mathfrak{t}}
\DeclareMathOperator{\lp}{\mathfrak{p}}
\DeclareMathOperator{\Lie}{Lie}
\begin{document}
 
\title{Branching rules for splint root systems}
\author{Logan Crew, Alexandre A. Kirillov, and Yao-Rui Yeo}
\date{\today}

\address{Logan Crew, Department of Mathematics, University of Pennsylvania.}
\email{crewl@math.upenn.edu}

\address{Alexandre A. Kirillov, Department of Mathematics, University of Pennsylvania.}
\email{kirillov@math.upenn.edu}

\address{Yao-Rui Yeo, Department of Mathematics, University of Pennsylvania.}
\email{yeya@math.upenn.edu}


\begin{abstract}
A root system is \emph{splint} if it is a decomposition into a union of two root systems. Examples of such root systems arise naturally in studying embeddings of reductive Lie subalgebras into simple Lie algebras. Given a splint root system, one can try to understand its branching rule. In this paper we discuss methods to understand such branching rules, and give precise formulas for specific cases, including the restriction functor from the exceptional Lie algebra $\mathfrak{g}_2$ to $\mathfrak{sl}_3$.
\end{abstract}

\maketitle

\section{Background}
Branching rules in group representation theory are the mathematical counterpart of the phenomenon of ``broken symmetry" in physics. Gelfand-Tsetlin patterns \cite{GT1} yield a very transparent algorithm to describe the spectrum of the restriction of an irreducible representation of the ``big" group $G(n)$, which is either the unitary group $\UU(n)$ or the orthogonal group $\OO(n)$, to the ``small" group $G(n-1)$.\footnote{
In \cite{GT1}, Gelfand and Tsetlin published their formulas without proof, possibly because the paper was intended as a contribution to mathematical physics, and their proof may have been of a computational nature.}

The second author has formulated and popularized numerous concrete problems and approaches related to Gelfand-Tsetlin patterns. This resulted in the discovery of an analog of these patterns for symplectic groups $\SP(n)$ \cite{AAK,VS} (but not for exceptional groups) and also provided the foundation for the present collaboration. 

In the following, we give some context and motivation for our approach. Experimental data shows that for some $H \subseteq G$, the multiplicity coefficients $m_{\Lambda,\lambda}$ in the restriction formula
$$
\Rest^G_H \Pi_\Lambda=\sum_{\lambda\in\widehat{H}} m_{\Lambda,\lambda} \pi_\lambda\quad\text{for}\quad \Lambda\in\widehat{G}
$$
coincide with the weight multiplicities of some irreducible representation of an auxiliary group $K$ in a natural way.
Gelfand-Tsetlin patterns are a special case of this phenomenon; here $K$ is the direct product of several copies of $\SU(2)$. 

This could be expanded as follows: Since the Weyl character formula for a representation $\Pi$ of $G$ describes  the restriction of $\Pi$ to the maximal torus $T\subset G$, 
the observation above is reminiscent of the chain rule for the derivative of the composite map $F=f\circ g$, where we have
\begin{equation*}
\mathbf DF(x)=\mathbf Df(g(x))\mathbf Dg(x).
\end{equation*}
In our case the role of the composite function is played by the restriction functor which satisfies
$$
\Rest^G_T\,=\, \Rest^H_T\circ\Rest^G_ H.
$$
Moreover, the restriction functor is compatible with natural operations on representations (such as sums, tensor products, and symmetric and exterior powers). This suggests a possible direction for future research: to show that any functor with these properties and some ``boundary conditions'' must satisfy an analog of the chain rule in the form proposed in this paper.

There are several other ways to prove the formula: from a change of variables in the Weyl formula to using the integral formula for the character and geometry of co-adjoint orbits. 

\section{A case study} \label{sec:four}

Consider the following two tables of integers. Figure \ref{f1a} shows the table of dimensions of irreducible representations of $\mathfrak{sl}_3$ indexed by highest weight $(\alpha,\beta)$, and Figure \ref{f1b} is the corresponding table for the exceptional Lie algebra $\mathfrak{g}_2$ indexed by highest weight $(k,l)$. Let $A_{\alpha,\beta}$ be the integer at the $(\alpha,\beta)$-entry of the left table, and let $G_{k,l}$ be the integer at the $(k,l)$-entry of the right table. Then the explicit formulas for $A_{\alpha,\beta}$ and $G_{k,l}$ are as follows:
	\begin{align*}
    A_{\alpha,\beta} &= \frac{(\alpha+1)(\beta+1)(\alpha+\beta+2)}{2}, \\
    G_{k,l} &= \frac{(k+1)(k+l+2)(2k+3l+5)(k+2l+3)(k+3l+4)(l+1)}{120}.
    \end{align*}

    \begin{figure}[!htb]
\centering
\begin{subfigure}{.48\textwidth}
  \centering
    \begin{tikzpicture}[scale=0.85]
\draw[thick] (-0.5,-0.5) edge[->] (6.5,-0.5);
\draw[thick] (-0.5,-0.5) edge[->] (-0.5,6.5);
\draw (6, -1) node {$\alpha$};
\draw (-1, 6) node {$\beta$};
\draw (0,0) node {$1$};
\draw (1,0) node {$3$};
\draw (2,0) node {$6$};
\draw (3,0) node {$10$};
\draw (4,0) node {$15$};
\draw (5,0) node {$21$};
\draw (6,0) node {$28$};
\draw (0,1) node {$3$};
\draw (1,1) node {$8$};
\draw (2,1) node {$15$};
\draw (3,1) node {$24$};
\draw (4,1) node {$35$};
\draw (5,1) node {$48$};
\draw (6,1) node {$63$};
\draw (0,2) node {$6$};
\draw (1,2) node {$15$};
\draw (2,2) node {$27$};
\draw (3,2) node {$42$};
\draw (4,2) node {$60$};
\draw (5,2) node {$81$};
\draw (6,2) node {$105$};
\draw (0,3) node {$10$};
\draw (1,3) node {$24$};
\draw (2,3) node {$42$};
\draw (3,3) node {$64$};
\draw (4,3) node {$90$};
\draw (5,3) node {$120$};
\draw (6,3) node {$154$};
\draw (0,4) node {$15$};
\draw (1,4) node {$35$};
\draw (2,4) node {$60$};
\draw (3,4) node {$90$};
\draw (4,4) node {$125$};
\draw (5,4) node {$165$};
\draw (6,4) node {$210$};
\draw (0,5) node {$21$};
\draw (1,5) node {$48$};
\draw (2,5) node {$81$};
\draw (3,5) node {$120$};
\draw (4,5) node {$165$};
\draw (5,5) node {$216$};
\draw (6,5) node {$273$};
\draw (0,6) node {$28$};
\draw (1,6) node {$63$};
\draw (2,6) node {$105$};
\draw (3,6) node {$154$};
\draw (4,6) node {$210$};
\draw (5,6) node {$273$};
\draw (6,6) node {$343$};
\end{tikzpicture}
\caption{Dimensions $A_{\alpha, \beta}$ of irreducible representations of $\mathfrak{sl}_3$}
\label{f1a}
\end{subfigure}\hfill\begin{subfigure}{.48\textwidth}\centering
    \begin{tikzpicture}[scale=0.85]
\draw[thick] (-0.5,-0.5) edge[->] (3.5,-0.5);
\draw[thick] (-0.5,-0.5) edge[->] (-0.5,3.5);
\draw (3, -1) node {$k$};
\draw (-1, 3) node {$l$};
\draw (0,0) node {$1$};
\draw (1,0) node {$7$};
\draw (2,0) node {$27$};
\draw (3,0) node {$77$};
\draw (0,1) node {$14$};
\draw (1,1) node {$64$};
\draw (2,1) node {$189$};
\draw (3,1) node {$448$};
\draw (0,2) node {$77$};
\draw (1,2) node {$286$};
\draw (2,2) node {$729$};
\draw (3,2) node {$1547$};
\draw (0,3) node {$273$};
\draw (1,3) node {$896$};
\draw (2,3) node {$2079$};
\draw (3,3) node {$4096$};
\end{tikzpicture}
\caption{Dimensions $G_{k, l}$ of irreducible representations of $\mathfrak{g}_2$}\label{f1b}
    \end{subfigure}
    \caption{$A_{\alpha, \beta}$ and $G_{k, l}$ for small values}
    \end{figure}
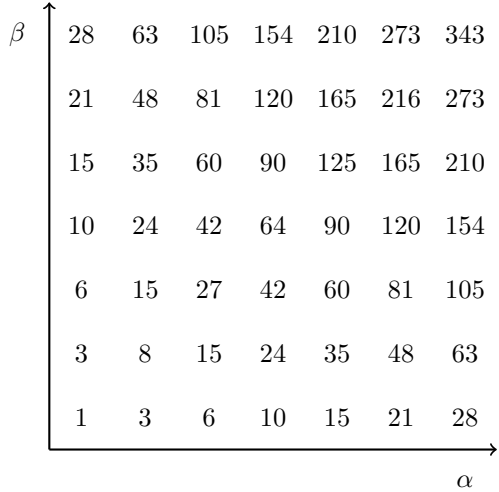
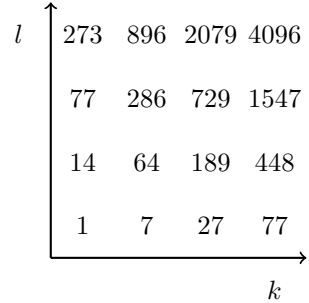

By embedding $\mathfrak{sl}_3$ into $\mathfrak{g}_2$ via the long roots, we can ask how an irreducible representation of $\mathfrak{g}_2$ decomposes when restricted to $\mathfrak{sl}_3$. We can conjecture the decomposition rule, also called the 
\textit{branching rule}, by matching up dimensions, i.e. picking a number $d$ from the right table, and finding a consistent array of numbers from the left table that sums to $d$. 

 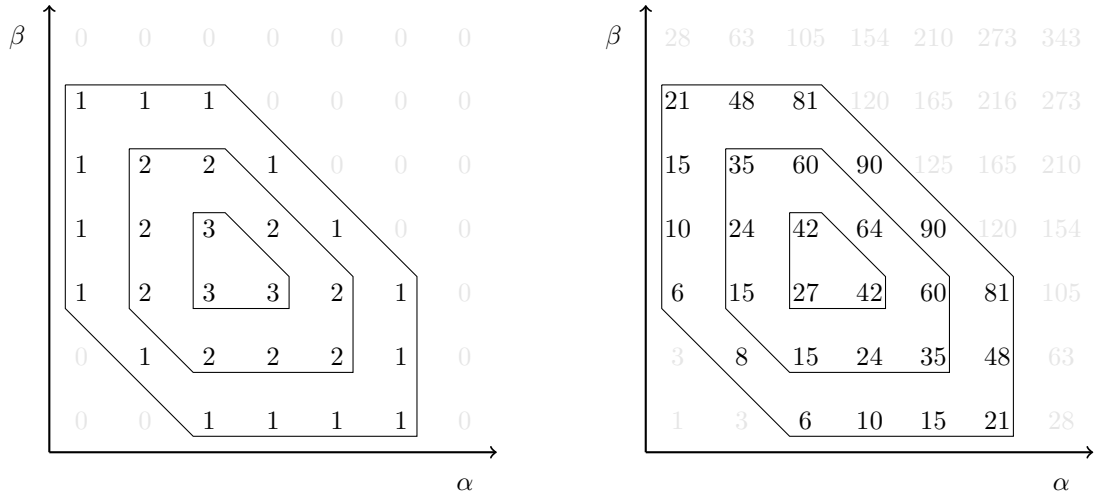
\begin{figure}   
 \centering
 \begin{subfigure}{0.48\textwidth}
   \centering
    \begin{tikzpicture}[scale=0.85]
\draw[thick] (-0.5,-0.5) edge[->] (6.5,-0.5);
\draw[thick] (-0.5,-0.5) edge[->] (-0.5,6.5);
\draw (6, -1) node {$\alpha$};
\draw (-1, 6) node {$\beta$};
\draw (-0.25,5.25)--(2.25,5.25) -- (5.25,2.25) -- (5.25,-0.25) -- (1.75,-0.25) -- (-0.25,1.75) -- (-0.25,5.25);
\draw (0.75,4.25)--(2.25,4.25) -- (4.25,2.25) -- (4.25,0.75) -- (1.75,0.75) -- (0.75,1.75) -- (0.75,4.25);
\draw (1.75,3.25)--(2.25,3.25) -- (3.25,2.25) -- (3.25,1.75) -- (1.75,1.75) -- (1.75,3.25);
\draw[color=gray!20] (0,0) node {$0$};
\draw[color=gray!20] (1,0) node {$0$};
\draw (2,0) node {$1$};
\draw (3,0) node {$1$};
\draw (4,0) node {$1$};
\draw (5,0) node {$1$};
\draw[color=gray!20] (6,0) node {$0$};
\draw[color=gray!20] (0,1) node {$0$};
\draw (1,1) node {$1$};
\draw (2,1) node {$2$};
\draw (3,1) node {$2$};
\draw (4,1) node {$2$};
\draw (5,1) node {$1$};
\draw[color=gray!20] (6,1) node {$0$};
\draw (0,2) node {$1$};
\draw (1,2) node {$2$};
\draw (2,2) node {$3$};
\draw (3,2) node {$3$};
\draw (4,2) node {$2$};
\draw (5,2) node {$1$};
\draw[color=gray!20] (6,2) node {$0$};
\draw (0,3) node {$1$};
\draw (1,3) node {$2$};
\draw (2,3) node {$3$};
\draw (3,3) node {$2$};
\draw (4,3) node {$1$};
\draw[color=gray!20] (5,3) node {$0$};
\draw[color=gray!20] (6,3) node {$0$};
\draw (0,4) node {$1$};
\draw (1,4) node {$2$};
\draw (2,4) node {$2$};
\draw (3,4) node {$1$};
\draw[color=gray!20] (4,4) node {$0$};
\draw[color=gray!20] (5,4) node {$0$};
\draw[color=gray!20] (6,4) node {$0$};
\draw (0,5) node {$1$};
\draw (1,5) node {$1$};
\draw (2,5) node {$1$};
\draw[color=gray!20] (3,5) node {$0$};
\draw[color=gray!20] (4,5) node {$0$};
\draw[color=gray!20] (5,5) node {$0$};
\draw[color=gray!20] (6,5) node {$0$};
\draw[color=gray!20] (0,6) node {$0$};
\draw[color=gray!20] (1,6) node {$0$};
\draw[color=gray!20] (2,6) node {$0$};
\draw[color=gray!20] (3,6) node {$0$};
\draw[color=gray!20] (4,6) node {$0$};
\draw[color=gray!20] (5,6) node {$0$};
\draw[color=gray!20] (6,6) node {$0$};
\end{tikzpicture}
\caption{Multiplicities, indexed by $\alpha$ and $\beta$}
\label{f2a}
\end{subfigure}\hfill\begin{subfigure}{0.48\textwidth}
    \begin{tikzpicture}[scale=0.85]
\draw[thick] (-0.5,-0.5) edge[->] (6.5,-0.5);
\draw[thick] (-0.5,-0.5) edge[->] (-0.5,6.5);
\draw (6, -1) node {$\alpha$};
\draw (-1, 6) node {$\beta$};
\draw (-0.25,5.25)--(2.25,5.25) -- (5.25,2.25) -- (5.25,-0.25) -- (1.75,-0.25) -- (-0.25,1.75) -- (-0.25,5.25);
\draw (0.75,4.25)--(2.25,4.25) -- (4.25,2.25) -- (4.25,0.75) -- (1.75,0.75) -- (0.75,1.75) -- (0.75,4.25);
\draw (1.75,3.25)--(2.25,3.25) -- (3.25,2.25) -- (3.25,1.75) -- (1.75,1.75) -- (1.75,3.25);
\draw[color=gray!20] (0,0) node {$1$};
\draw[color=gray!20] (1,0) node {$3$};
\draw (2,0) node {$6$};
\draw (3,0) node {$10$};
\draw (4,0) node {$15$};
\draw (5,0) node {$21$};
\draw[color=gray!20] (6,0) node {$28$};
\draw[color=gray!20] (0,1) node {$3$};
\draw (1,1) node {$8$};
\draw (2,1) node {$15$};
\draw (3,1) node {$24$};
\draw (4,1) node {$35$};
\draw (5,1) node {$48$};
\draw[color=gray!20] (6,1) node {$63$};
\draw (0,2) node {$6$};
\draw (1,2) node {$15$};
\draw (2,2) node {$27$};
\draw (3,2) node {$42$};
\draw (4,2) node {$60$};
\draw (5,2) node {$81$};
\draw[color=gray!20] (6,2) node {$105$};
\draw (0,3) node {$10$};
\draw (1,3) node {$24$};
\draw (2,3) node {$42$};
\draw (3,3) node {$64$};
\draw (4,3) node {$90$};
\draw[color=gray!20] (5,3) node {$120$};
\draw[color=gray!20] (6,3) node {$154$};
\draw (0,4) node {$15$};
\draw (1,4) node {$35$};
\draw (2,4) node {$60$};
\draw (3,4) node {$90$};
\draw[color=gray!20] (4,4) node {$125$};
\draw[color=gray!20] (5,4) node {$165$};
\draw[color=gray!20] (6,4) node {$210$};
\draw (0,5) node {$21$};
\draw (1,5) node {$48$};
\draw (2,5) node {$81$};
\draw[color=gray!20] (3,5) node {$120$};
\draw[color=gray!20] (4,5) node {$165$};
\draw[color=gray!20] (5,5) node {$216$};
\draw[color=gray!20] (6,5) node {$273$};
\draw[color=gray!20] (0,6) node {$28$};
\draw[color=gray!20] (1,6) node {$63$};
\draw[color=gray!20] (2,6) node {$105$};
\draw[color=gray!20] (3,6) node {$154$};
\draw[color=gray!20] (4,6) node {$210$};
\draw[color=gray!20] (5,6) node {$273$};
\draw[color=gray!20] (6,6) node {$343$};
\end{tikzpicture}
\caption{Hexagons in $A_{\alpha, \beta}$}
\label{f2b}
\end{subfigure}
\caption{Hexagons with pointwise product $G_{3,2}$}
\end{figure}

Note that $G_{k,0}$ is the sum of $A_{\alpha,\beta}$ over the triangle with vertices $A_{(0,0)},A_{(k,0)},A_{(0,k)}$. Similarly, $G_{0,l}$ is the sum of $A_{\alpha,\beta}$ over the triangle with vertices $A_{(l,l)},A_{(l,0)},A_{(0,l)}$. If we look at the nondegenerate example $G_{3,2}=1547$, it is the sum of the pointwise product of the following two hexagons on the $(\alpha,\beta)$-plane, where the second hexagon is a subset of the array of numbers $A_{\alpha,\beta}$.

In other words, $G_{3,2}$ is the weighted sum of $A_{\alpha,\beta}$ on the hexagon with vertices 
    $$A_{5,2},A_{5,0},A_{2,0},A_{0,2},A_{0,5},A_{2,5},$$
where the outer layer is counted with multiplicity one, the middle layer is counted with multiplicity two, and the inner triangular layer is counted with multiplicity three. After some experimentation, we can derive the following rule: 
	$$
    G_{k,l} = \sum_{\alpha,\beta} n_{\alpha,\beta} A_{\alpha,\beta},
    $$
where $(\alpha,\beta)$ are integral points on and inside of the hexagon
	$$
    \begin{tikzpicture}
\draw (3,0) -- (1,0) node[below left=-0.08cm] {$(l,0)$}
 -- (0,1) node[left] {$(0,l)$}
      -- (0,3) node[above left=-0.08cm] {$(0,k+l)$}
 -- (1,3) node[above right=-0.08cm] {$(l,k+l)$}
--  (3,1) node[right] {$(k+l,l)$}
           --  (3,0) node[below right=-0.08cm] {$(k+l,0)$};
\end{tikzpicture}
    $$
with vertices
	$$
    (k+l,l),(k+l,0),(l,0),(0,l),(0,k+l),(l,k+l),
    $$
and $n_{\alpha,\beta}$ are positive integers determined as follows.
	\begin{itemize}
    \item If $(\alpha,\beta)$ lies on the perimeter (the zeroth layer) of the hexagon $H$ above, then $n_{\alpha,\beta}=1$.
    \item If $(\alpha,\beta)$ lies on the first layer of $H$ (which are points adjacent to the perimeter), then $n_{\alpha,\beta}=2$.
    \item Iterating, if $(\alpha,\beta)$ lies on the $j^{th}$ layer of $H$, and if this $j^{th}$ layer is still a hexagon, then $n_{\alpha,\beta}=j+1$.
    \item The hexagon $H$ degenerates at the $m^{th} = \min\{k,l\}^{th}$ layer to a triangle with vertices $(l,k),(k,l),(l,l)$ (or possibly the single point $(l,l)$ if $k = l$). Set $n_{\alpha,\beta}=m+1$ for all points $(\alpha,\beta)$ on this triangle.
    \end{itemize}
In Section \ref{g2a2} we will show that this decomposition of $G_{k,l}$ into $A_{\alpha,\beta}$ works on the representation theoretic level as well.

We now raise a few questions about the branching rule of the restriction functor on simple Lie algebras.
    \begin{qn}
        Given an embedding of a simple Lie algebra $\mathfrak{a}$ into $\mathfrak{g}$, can we give an explicit branching rule for $\Rest ^{\mathfrak{g}}_{\mathfrak{a}}$ like the one for $\Rest ^{\mathfrak{g}_2}_{\mathfrak{sl}_3}$ above?
    \end{qn} 
    \begin{qn}
    What governs the coefficients of the branching rule? For example, the coefficients for $\Rest ^{\mathfrak{g}_2}_{\mathfrak{sl}_3}$ is the weighted hexagon illustrated above.
    \end{qn} 
     \begin{qn}
        How many irreducible factors of $\mathfrak{a}$ are there in $\Rest ^{\mathfrak{g}}_{\mathfrak{a}} \Pi_{\lambda}$, where $\Pi_{\lambda}$ is an irreducible representation of $\mathfrak{g}$? In particular, what is the sum of the coefficients of the branching rule?
    \end{qn} 
 In this paper we will work with splint root systems. Then Question 1 is related to Weyl group symmetric functions and the Littlewood-Richardson rule if viewed combinatorially, and Question 2 is related to the weight diagram of a sub-root system corresponding to the splint root system. A solution to Question 3 falls out from a satisfactory answer to Question 1, and is related to the dimension of a particular irreducible representation of an auxiliary Lie algebra. For example, in the above case with $\mathfrak{g}_2$ and $\mathfrak{sl}_3$ we have the curious identity
 $$
 \sum_{\alpha,\beta}n_{\alpha,\beta}= A_{k,l}.
 $$

\section{Splint root systems} \label{srssec}

Let $\Delta$ be a simple root system. We want to study the root systems for which $\Delta$ is \textit{splint}, i.e. $\Delta=\Delta_1\sqcup \Delta_2$ is a disjoint union of two root systems $\Delta_1$ and $\Delta_2$, each of which is embedded into $\Delta$ as an additive group, with $\Delta_1$ embedded metrically and $\Delta_2$ embedded in such a way that the length of roots are scaled uniformly. The notion of a splint was introduced by David Richter in \cite{5}, and he gave a classification of possible splints of root systems (including the cases where $\Delta_1$ may not be embedded metrically, for which we do not consider). The table below lists all possible splint root systems, and we label them Types (I) to (V).
    $$
\begin{tabular}{ c | c | c | c }
 Type & $\Delta$ & $\Delta_1$ & $\Delta_2$ \\ \hline
 (I) & $A_r$ $(r\geq 2)$ & $A_{r-1}$ & $(A_1)^{\oplus r}$ \\
 (II) & $B_r$ $(r\geq 2)$ & $D_r$ & $(A_1)^{\oplus r}$ \\
 (III) & $C_r$ $(r\geq 3)$ & $(A_1)^{\oplus r}$ & $D_r$ \\
 (IV) & $G_2$ & $A_2$ & $A_2$ \\
 (V) & $F_4$ & $D_4$ & $D_4$ \\
\end{tabular}
    $$
We note that the last four types of splint root system have $\Delta_2$ embedded metrically into $\Delta$.

Now write $\mathfrak{a}$ to be the Lie algebra of $\Delta_1$, corresponding to a Lie subalgebra of $\g$. Letting $\Pi_\lambda$ be an irreducible representation of $\g$ of highest weight $\lambda$, we have a decomposition
$$
\text{Res}^{\g}_{\mathfrak{a}}\Pi_{\lambda} = \bigoplus_{\nu} b_{\lambda,\nu} \pi_{\nu}
$$
We are interested in computing the branching coefficients $b_{\lambda,\nu}$. The branching coefficients for Types (I) and (II) are well known examples of Gelfand-Tsetlin patterns \cite{GT1}, which we now state. For Type (I), every irreducible representation of $\ssl_{r+1}$ is indexed by a Young tableau $Y$ with at most $r$ rows, and its restriction to $\ssl_{r}$ is the direct sum of irreducible representations of $\ssl_{r}$ corresponding to those Young tableaux obtained from $Y$ by removing some boxes, each of multiplicity one. Explicitly, if $\pi^r_{\lambda_1,\dots,\lambda_r}$ is a highest weight representation of $\ssl_{r+1}$ with $\lambda_i\geq\lambda_{i+1}$, then
    $$
    \Rea \pi^{r}_{\lambda_1,\dots,\lambda_{r}} = 
    \bigoplus_{\lambda_i\geq \mu_i\geq \lambda_{i+1}} \pi^{r-1}_{\mu_1,\dots,\mu_{r-1}}.
    $$
As for Type (II), recall that every irreducible representation of $\so_{N}$ is labeled by 
    \begin{itemize}
    \item $f_1\geq \dotsm\geq f_{r-1}\geq f_r\geq 0$ if $N=2r+1$,
    \item $f_1\geq \dotsm\geq f_{r-1}\geq |f_r|$ if $N=2r$.
    \end{itemize}
where the $f_i$'s are simultaneously integers or half-integers. If we write $\Pi^r_{f_1,\dots,f_r}$ to be the highest weight representation of $\so_{2r+1}$, and if we write $\pi^r_{g_1,\dots,g_r}$ to be the highest weight representation of $\so_{2r}$, then the branching rule is
    \begin{align*}
    \Rest^{\so_{2r+1}}_{\so_{2r}} \Pi^r_{f_1,\dots,f_r} &= \bigoplus_{\substack{f_1\geq g_1\geq f_2\geq\dotsm\geq f_{r-1}\geq g_{r-1}\geq f_r\geq |g_r| \\ f_i-g_i\in\mathbb{Z}}} \pi^r_{g_1,\dots,g_r}.
    \end{align*}
We would like similar explicit branching rules for the other three types of splint root system listed above. 

A computationally intensive heuristic for the branching coefficients $b_{\lambda,\nu}$ exists in \cite{3}. In this heuristic, the computation of $b_{\lambda,\nu}$ relies on the roots $\Delta\setminus\Delta_1$.
    \begin{thm}[{\cite[Property 2.1]{3}}]
    Let $m_{\Delta_2,\mu,\nu}$ be the multiplicity of $\tilde{\nu}$ from the weight diagram of $\Delta_2$ with highest weight $\tilde{\mu}$. Then 
        $$
        m_{\Delta_2,\mu,\nu}=b_{\mu-\phi(\tilde{\mu}-\tilde{\nu})},
        $$
    where $\phi$ is the embedding of $\Delta_2$ into $\Delta$.
    \end{thm}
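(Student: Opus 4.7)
The plan is to derive the identity from the Weyl character formula applied on both sides of the splint. Starting from
$$\chi^{\mathfrak{g}}_{\mu} \;=\; \frac{\sum_{w \in W_{\mathfrak{g}}} \mathrm{sgn}(w)\, e^{w(\mu+\rho_{\mathfrak{g}})}}{\prod_{\alpha \in \Delta^{+}}\bigl(e^{\alpha/2}-e^{-\alpha/2}\bigr)},$$
the splint decomposition $\Delta^{+} = \Delta_{1}^{+} \sqcup \Delta_{2}^{+}$ factors the Weyl denominator as $D^{\mathfrak{g}} = D^{\mathfrak{a}} \cdot D^{\Delta_{2}}$, where the first factor is the Weyl denominator of $\mathfrak{a}$ (because $\Delta_{1}$ is the root system of $\mathfrak{a}$). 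Substituting the branching decomposition $\chi^{\mathfrak{g}}_{\mu}|_{\mathfrak{t}_{\mathfrak{a}}} = \sum_{\nu} b_{\mu,\nu}\, \chi^{\mathfrak{a}}_{\nu}$ and clearing the common factor $D^{\mathfrak{a}}$ yields
$$\sum_{w \in W_{\mathfrak{g}}} \mathrm{sgn}(w)\, e^{w(\mu+\rho_{\mathfrak{g}})} \;=\; D^{\Delta_{2}} \cdot \sum_{\nu} b_{\mu,\nu} \sum_{w' \in W_{\mathfrak{a}}} \mathrm{sgn}(w')\, e^{w'(\nu+\rho_{\mathfrak{a}})}.$$
All branching data has been packaged into this single factor of $D^{\Delta_{2}}$.

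Next I would invoke the Weyl character formula in the reverse direction, viewing $\Delta_{2}$ as the root system of an auxiliary Lie algebra $\mathfrak{b}$ whose irreducible representation of highest weight $\tilde{\mu}$ has character $\chi^{\mathfrak{b}}_{\tilde{\mu}} = \sum_{\tilde{\nu}} m_{\Delta_{2},\mu,\nu}\, e^{\tilde{\nu}}$. The Weyl numerator identity rewrites $D^{\Delta_{2}} \cdot \chi^{\mathfrak{b}}_{\tilde{\mu}}$ as an alternating $W_{\Delta_{2}}$-sum, and pushing this identity forward through the embedding $\phi : \Delta_{2} \hookrightarrow \Delta$ transfers the weight multiplicities $m_{\Delta_{2},\mu,\nu}$ into coefficients of $e^{\mu - \phi(\tilde{\mu}-\tilde{\nu})}$ on the right-hand side above. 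Matching coefficients of exponentials on both sides then isolates each $b_{\mu,\, \mu - \phi(\tilde{\mu}-\tilde{\nu})}$ and identifies it with $m_{\Delta_{2},\mu,\nu}$, which is precisely the asserted formula.

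The main obstacle is the bookkeeping of the $\rho$-shifts and the need to confirm that $\phi$ aligns the weight lattices of $\Delta_{2}$ and $\Delta$ compatibly with all three Weyl group actions. Concretely, one has to verify that the outer alternating sum over $W_{\mathfrak{g}}$ regroups into blocks indexed by cosets $W_{\mathfrak{g}}/W_{\mathfrak{a}}$, each block contributing to a single dominant $\mathfrak{a}$-weight, and that the contributions within a block assemble into the $W_{\Delta_{2}}$-alternating sum realizing the Weyl character of an irreducible $\mathfrak{b}$-representation. The splint hypothesis, together with the metric embedding of $\Delta_{1}$ and the uniform scaling on $\Delta_{2}$, is what makes this regrouping unambiguous and allows the coefficient comparison to be carried out directly.
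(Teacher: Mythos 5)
The paper does not actually prove this statement: it is imported verbatim from Lyakhovsky--Nazarov--Kakin \cite{3} as Property 2.1 and used as a black box, and the paper's Section on strategy only sketches the same denominator factorization you start from. So the question is whether your outline would close on its own. Your first step is sound and standard: since $\Delta^+=\Delta_1^+\sqcup\phi(\Delta_2^+)$ as sets, the Weyl denominator factors as $\delta_{\mathfrak g}=\delta_{\mathfrak a}\,\delta'$ with $\delta'=\prod_{\alpha\in\phi(\Delta_2^+)}(e^{\alpha/2}-e^{-\alpha/2})$, and the Weyl numerator of $\Pi_\mu$ equals $\delta'\cdot\sum_\nu b_{\mu,\nu}N^{\mathfrak a}_\nu$. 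The gap is in your second paragraph, where the entire content of the theorem is asserted rather than derived: you posit that the packaged branching data can be reorganized into the $W_{\Delta_2}$-alternating numerator of a single irreducible $\mathfrak b$-module ``pushed forward through $\phi$.'' But $\phi$ is only an embedding of $\Delta_2$ as an additive system with uniformly scaled root lengths; it need not be an isometry and need not intertwine $W_{\Delta_2}$ with any subgroup of $W_{\mathfrak g}$. Type (I) already defeats the proposed mechanism: the simple roots of $(A_1)^{\oplus r}$ are mutually orthogonal, while their images $e_i-e_{r+1}$ in $A_r$ pairwise make an angle of $60^\circ$ and their reflections generate all of $S_{r+1}$ rather than $(\mathbb Z/2)^r$, so there is no way to transfer a $W_{\Delta_2}$-alternating sum through $\phi$ and match it inside the $W_{\mathfrak g}$-sum. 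The coset regrouping over $W_{\mathfrak g}/W_{\mathfrak a}$ that you invoke at the end likewise has no reason to organize itself into $W_{\Delta_2}$-orbits, and even in the metrically embedded cases (Types (II)--(V)) the claim that the quotient is the character of one irreducible $\mathfrak b$-module is exactly what has to be proved.

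The repair, which is what \cite{3} actually does, is to avoid transporting Weyl group actions altogether and to use only the product form of $\delta'$. The expansion of $\prod_{\alpha\in\Delta_2^+}(1-e^{-\alpha})$ is a signed sum supported on $\mathbb Z_{\geq 0}$-combinations of positive roots of $\Delta_2$, and this purely additive data is precisely what $\phi$ transports. Reading off coefficients in your identity (after dividing by $\delta_{\mathfrak a}$ and restricting to dominant $\mathfrak a$-weights) yields a Racah-type recurrence for the $b_{\mu,\nu}$ whose increments are the images under $\phi$ of the increments in the corresponding recurrence for the weight multiplicities $m_{\Delta_2,\mu,\nu}$ of the $\Delta_2$-module with highest weight $\tilde\mu$; since the two recurrences are formally identical and share the seed $b_{\mu,\mu}=1=m_{\Delta_2,\mu,\mu}$, induction down the weight order gives the theorem. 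You would also need to make explicit how $\tilde\mu$ is determined by $\mu$, which your sketch (like the paper's own statement) leaves implicit.
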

This theorem, together with Freudenthal's Multiplicity Formula \cite[Section 22]{hum} tells us all the branching coefficients in principle. However, this is not easy to compute in practice. Our goal in this paper is to give a framework to understand the branching coefficients directly using the Weyl character formula and give an explicit formula for the Type (IV) branching rule, as well as conjecture formulas for Types (III) and (V) branching rules.

\section{Preliminaries}

\subsection{The Weyl character and dimension formulas}

We recall some computational tools from representation theory. These are very classical results (see \cite{1} for an exposition, for instance), and our main purpose is to fix notation.

Let $G$ be a compact simply connected Lie group, and let $T$ be a maximal torus of $G$. Then the Lie algebra $\g$ of $G$ can be written as
	$$
    \g = \lt \oplus \lp,
    $$
where $\lt=\Lie(T)$ and $\lp=\Lie(G/T)$ is the subspace of eigenvectors for the roots.

For any irreducible representation $L_{\lambda}$ of $G$ with highest weight $\lambda$, we can decompose $L_{\lambda}$ into its weight decomposition
	$$
    L_{\lambda} = \bigoplus_{\mu\in T^*} L_{\lambda}[\mu]
    $$
 where $L_{\lambda}[\mu] = \{v\in L_{\lambda} : tv = \mu(t) v \text{ for all $t\in T$}\}$. Define its \textit{character} to be the finite sum
	$$
    \chi(L_{\lambda}) = \sum_{\mu\in T^*} \dim (L_{\lambda}[\mu]) e^{\mu}.
    $$

\begin{thm}[Weyl Character Formula]
Let $W$ be the Weyl group of $G$, and let $l(w)$ be the length of an element $w\in W$. Then
	$$
    \chi(L_{\lambda}) = \frac{\sum_{w\in W}(-1)^{l(w)}e^{w(\lambda+\rho)}}{\delta},
    $$
where 
	$$
    \delta = e^{\rho} \prod_{\alpha\in R^+}(1-e^{-\alpha}) = \prod_{\alpha\in R^+}(e^{\alpha/2}-e^{-\alpha/2}),
    $$
and $\rho$ is the half-sum of the positive roots $R^+$.
\end{thm}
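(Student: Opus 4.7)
The plan is to follow Weyl's classical analytic strategy, exploiting the compact-group structure to reduce the identity to an orthogonality computation on the torus. In outline: for each weight $\mu$, set $A_\mu := \sum_{w\in W}(-1)^{l(w)} e^{w\mu}$; first show that $\delta$ itself equals $A_\rho$; then argue that $\delta\,\chi(L_\lambda)$ is an alternating element of the character ring whose expansion in the basis $\{A_\mu : \mu \text{ strictly dominant}\}$ begins with $A_{\lambda+\rho}$; and finally pin the expansion down to a single term using Schur orthogonality on $G$.

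First, I would establish the Weyl denominator identity $\delta = A_\rho$. The product $\prod_{\alpha \in R^+}(e^{\alpha/2}-e^{-\alpha/2})$ is manifestly alternating under each simple reflection $s_i$, since $s_i$ sends $\alpha_i\mapsto -\alpha_i$ and permutes the remaining positive roots. Its highest-weight term is $e^\rho$ with coefficient $1$, and since the $A_\mu$ for strictly dominant $\mu$ form a basis of the alternating elements of the character ring (each $A_\mu$ having $e^\mu$ as its top term), this forces $\delta = A_\rho$.

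Next, $\chi(L_\lambda)$ is $W$-invariant while $\delta$ is alternating, so the product $\delta\,\chi(L_\lambda)$ is alternating. Such an element vanishes on non-regular weights and is therefore a $\mathbb{Z}$-linear combination $\sum_{\mu}a_\mu A_\mu$ ranging over strictly dominant $\mu$. Using the highest-weight theorem (the weights of $L_\lambda$ are $\leq \lambda$ in the root partial order, with $\lambda$ appearing exactly once) together with the fact that the top weight of $\delta$ is $\rho$, the leading term of $\delta\,\chi(L_\lambda)$ is $e^{\lambda+\rho}$ with coefficient $1$; matching against the expansion yields $a_{\lambda+\rho}=1$, while the remaining strictly dominant $\mu$ that can occur must satisfy $\mu<\lambda+\rho$.

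The last step, which is also the main obstacle, is to eliminate contributions from strictly dominant $\mu$ with $\mu<\lambda+\rho$. Here I would invoke Schur orthogonality $\int_G |\chi(L_\lambda)|^2\,dg = 1$ combined with the Weyl integration formula $\int_G f\,dg = \tfrac{1}{|W|}\int_T f\,|\delta|^2\,dt$ for $W$-invariant $f$, which together give $\int_T |\delta\,\chi(L_\lambda)|^2\,dt = |W|$. Combined with the torus orthogonality $\int_T A_\mu\,\overline{A_\nu}\,dt$, which equals $|W|$ when $\mu=\nu$ and $0$ otherwise (for $\mu,\nu$ strictly dominant), the expansion yields $\sum_\mu |a_\mu|^2 = 1$; integrality of the $a_\mu$ together with $a_{\lambda+\rho}=1$ then forces every other coefficient to vanish, completing the proof. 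A purely algebraic route would instead replace this last step by Freudenthal's multiplicity formula or the BGG resolution of $L_\lambda$ by Verma modules, but in the compact-group setting laid out above the $L^2$ argument is by far the cleanest.
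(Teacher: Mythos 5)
The paper does not prove this theorem: it is quoted in the Preliminaries as a classical result (with a pointer to the textbook literature), so there is no in-paper argument to compare yours against. On its own terms, your outline is a correct rendering of Weyl's original analytic proof in the compact setting the paper adopts ($G$ compact and simply connected): the denominator identity, the observation that $\delta\,\chi(L_\lambda)$ is alternating and hence a $\mathbb{Z}$-combination of the $A_\mu$ with leading coefficient $a_{\lambda+\rho}=1$, and the $L^2$ argument via Schur orthogonality, the Weyl integration formula, and torus orthogonality of the $A_\mu$ for regular $\mu$, which gives $\sum_\mu a_\mu^2=1$ and kills all other terms. The one place you compress too much is the denominator identity: knowing that the top term of $\prod_{\alpha\in R^+}(e^{\alpha/2}-e^{-\alpha/2})$ is $e^\rho$ only pins down the coefficient of $A_\rho$ in the expansion; to conclude $\delta=A_\rho$ you still need the standard lemma that the only strictly dominant weight of the form $\rho-\sum_{\alpha\in S}\alpha$ (with $S\subseteq R^+$) is $\rho$ itself, or alternatively you can note that the denominator identity is just the $\lambda=0$ instance of your final $L^2$ step and need not be proved separately. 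With that point supplied, the argument is complete; it is also consistent with (indeed, it is) the proof in the sources the paper cites.
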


The formula below allows us to compute the dimension of any irreducible representation of $G$.

\begin{thm}[Weyl Dimension Formula]
Let $L_{\lambda}$ be the irreducible representation of $G$ with highest weight $\lambda$. Then
	$$
    \dim (L_{\lambda}) = \prod_{\alpha\in R^+} \frac{(\lambda+\rho,\alpha)}{(\rho,\alpha)}.
    $$
\end{thm}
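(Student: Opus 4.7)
The plan is to derive the dimension formula directly from the Weyl Character Formula by evaluating the character at the identity of $T$: under the natural identification $e^\mu(h) = e^{\mu(h)}$, we have $\dim L_\lambda = \chi(L_\lambda)(0)$. Since both the numerator $\sum_w (-1)^{l(w)}e^{w(\lambda+\rho)}$ and the denominator $\delta$ vanish at $h = 0$, I would resolve the $0/0$ by introducing a one-parameter perturbation: fix a regular $h_0 \in \lt$ and compute $\lim_{t \to 0}\chi(L_\lambda)(t h_0)$.

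For the denominator, each factor of $\delta(t h_0) = \prod_{\alpha \in R^+}\bigl(e^{t\alpha(h_0)/2} - e^{-t\alpha(h_0)/2}\bigr)$ Taylor-expands to $t\alpha(h_0) + O(t^3)$, giving $\delta(t h_0) = t^{|R^+|}\prod_{\alpha \in R^+}\alpha(h_0) + O(t^{|R^+|+2})$. For the numerator $N_\mu(t) := \sum_w (-1)^{l(w)} e^{t(w\mu)(h_0)}$ with $\mu = \lambda+\rho$, I would Taylor expand in $t$: the coefficient of $t^k$ is proportional to $\sum_w (-1)^{l(w)}\bigl((w\mu)(h_0)\bigr)^k$, a polynomial that is $W$-antisymmetric separately in $h_0$ and in $\mu$. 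Because every nonzero $W$-antisymmetric polynomial is divisible by $\prod_{\alpha \in R^+}\alpha$ (the unique antisymmetric polynomial of minimal degree $|R^+|$, up to scalar), these coefficients vanish for $k < |R^+|$, and the $k = |R^+|$ coefficient must take the form
$$C\prod_{\alpha \in R^+}(\alpha,\mu)\prod_{\alpha \in R^+}\alpha(h_0)$$
for some scalar $C$, by one-dimensionality of the space of $W$-antisymmetric polynomials of degree $|R^+|$ (using the Killing form to identify $\lt \cong \lt^*$).

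To pin down $C$, I would specialize $\mu = \rho$: by the Weyl denominator formula $N_\rho(t) = \delta(t h_0)$, and matching the $t^{|R^+|}$ coefficients computed above forces $C = |R^+|!/\prod_{\alpha\in R^+}(\alpha,\rho)$. Substituting back yields
$$N_{\lambda+\rho}(t) = t^{|R^+|}\prod_{\alpha\in R^+}\alpha(h_0)\prod_{\alpha\in R^+}\frac{(\lambda+\rho,\alpha)}{(\rho,\alpha)} + O(t^{|R^+|+1}),$$
so dividing by $\delta(t h_0)$ and letting $t \to 0$ produces the claimed identity. The main obstacle is the antisymmetry-plus-degree-counting step identifying the leading coefficient in factored form $C\prod_{\alpha}(\alpha,\mu)\prod_{\alpha}\alpha(h_0)$; this rests on the classical fact that the space of $W$-antisymmetric polynomials of degree $|R^+|$ is one-dimensional, spanned by $\prod_{\alpha \in R^+}\alpha$. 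With that lemma in hand, everything else is routine Taylor expansion and a consistency check against the Weyl denominator formula.
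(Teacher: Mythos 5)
Your argument is correct, but note that the paper does not prove this statement at all: the Weyl Dimension Formula appears in the Preliminaries purely as a recalled classical tool (with a pointer to Kirillov Jr.'s textbook), so there is no in-paper proof to compare against. What you have written is the standard derivation from the Weyl Character Formula by regularizing the $0/0$ at the identity, and the key steps all check out: the $W$-antisymmetry of $\sum_w (-1)^{l(w)}\bigl((w\mu)(h_0)\bigr)^k$ in each variable, the vanishing of the coefficients in degree $k<|R^+|$ by divisibility by $\prod_{\alpha\in R^+}\alpha$, the factored form $C\prod_\alpha(\alpha,\mu)\prod_\alpha\alpha(h_0)$ of the leading coefficient, and the normalization of $C$ via $\mu=\rho$ and the denominator formula (your $C=|R^+|!/\prod_\alpha(\alpha,\rho)$ is consistent once the $1/|R^+|!$ from the Taylor expansion is reinstated, and your displayed expansion of $N_{\lambda+\rho}(t)$ is the correct one). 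One remark: you can bypass the antisymmetric-polynomial lemma entirely by choosing the specific direction $h_0=\nu^{-1}(\rho)$ (the image of $\rho$ under the Killing-form identification $\lt^*\cong\lt$); then $(w(\lambda+\rho))(h_0)=(\lambda+\rho,w^{-1}\rho)$, so the numerator itself becomes a Weyl denominator in the variable $\lambda+\rho$ and factors as $\prod_{\alpha\in R^+}\bigl(e^{t(\lambda+\rho,\alpha)/2}-e^{-t(\lambda+\rho,\alpha)/2}\bigr)$, after which the limit $t\to 0$ is a one-line computation. Your route is more general (it identifies the full leading term for arbitrary regular $h_0$) at the cost of the divisibility lemma; either is a complete proof.
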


\subsection{A strategy} \label{strat}

Let us return to the notations introduced in Section 1. Our strategy to write down explicit branching coefficient $b_{\lambda,\nu}$ is as follows. We check that our branching coefficients are plausible by first verifying that $\dim \text{Res}^{\g}_{\mathfrak{a}}\Pi_{\lambda}$ and $\dim \bigoplus_{\nu} b_{\lambda,\nu} \pi_{\nu}$ agree. Then we will use the Weyl character formula to make sure that the weight multiplicities check out. A way to do this is as follows. Write the denominator $\delta_{\mathfrak{g}}$ of the Weyl character formula for $\Pi_{\lambda}$ as 
    $$
    \delta_{\mathfrak{g}} = \delta_{\mathfrak{a}}\delta',
    $$
where $\delta'$ corresponds to the roots of $\mathfrak{g}$ inside $\mathfrak{g}\setminus\mathfrak{a}$. Observe that the function $\delta' \chi(\Pi_{\lambda})$ is a symmetric function on the Weyl group $W_{\mathfrak{a}}$ of the root system of $\mathfrak{a}$. Hence we can write both $\delta' \chi(\Pi_{\lambda})$ and $\delta'$ as a polynomial in $\chi(\pi_\mu)$ and compute branching coefficients by comparing 
    $$
    \delta' \chi(\Pi_{\lambda}) \qquad\text{and}\qquad
    \delta' \chi(\pi_\mu).
    $$
If $W_{\mathfrak{a}}$ is the symmetric group, then the latter product can be understood using the Littlewood-Richardson rule; we will see this when we prove the Type (IV) branching rule in Section \ref{g2a2}. In general one would need to employ a suitable Littlewood-Richardson rule for $W_{\mathfrak{a}}$. 

In our computations we are led to the following conjecture.

\begin{conj} \label{conjj}
Write $\Rest^{\g}_{\mathfrak{a}}\Pi_{\lambda}=\bigoplus_{\nu} b_{\lambda,\nu} \pi_{\nu}$. Then
$$\sum_{\lambda} b_{\lambda,\mu} = \dim \omega_{\lambda},$$
where $\omega_{\lambda}$ is a highest weight representation (depending on $\lambda$) for the root system $\Xi$ of an auxiliary simple Lie algebra.
\end{conj}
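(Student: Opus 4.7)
I first observe what appears to be a typo in the conjecture statement: the summation should be indexed over $\nu$ (the small-algebra highest weights) rather than $\lambda$, matching the motivating identity $\sum_{\alpha,\beta}n_{\alpha,\beta}=A_{k,l}$ at the end of Section~\ref{sec:four}. Under this reading the claim becomes $\sum_\nu b_{\lambda,\nu}=\dim \omega_\lambda$. My plan is to derive this directly from Theorem~1 (Property 2.1 of \cite{3}), which reinterprets each nonzero branching coefficient as a weight multiplicity of some irreducible representation of the Lie algebra whose root system is $\Delta_2$. With this identification the sum $\sum_\nu b_{\lambda,\nu}$ collapses to a sum of weight multiplicities of a single irreducible representation, which by definition is its dimension; the auxiliary root system $\Xi$ is then (conjecturally) $\Delta_2$.

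Concretely, Step~1 is to let $\omega_\lambda$ be the irreducible representation of the Lie algebra of $\Delta_2$ of highest weight $\tilde\mu(\lambda)$, where $\tilde\mu(\lambda)$ is the dominant $\Delta_2$-weight associated with $\lambda$ by Theorem~1. Then Theorem~1 gives $b_{\lambda,\nu} = m_{\Delta_2,\tilde\mu(\lambda),\tilde\nu}$ whenever $\nu$ and $\tilde\nu$ are related by the affine relation involving $\phi$ prescribed by the theorem. Step~2 is to verify that this relation is a bijection between the branching support $\{\nu : b_{\lambda,\nu}>0\}$ and the weight support of $\omega_\lambda$; since $\phi$ is injective, the argument reduces to identifying domain and codomain and checking that every weight of $\omega_\lambda$ is actually attained. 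Step~3 is the one-line conclusion
\[
\sum_\nu b_{\lambda,\nu} \;=\; \sum_{\tilde\nu \in \mathrm{wt}(\omega_\lambda)} m_{\Delta_2,\tilde\mu(\lambda),\tilde\nu} \;=\; \dim \omega_\lambda.
\]
As a sanity check for Type (IV), the hexagonal multiplicity pattern of Section~\ref{sec:four} with layered multiplicities $1,2,\dots,\min(k,l)+1$ is exactly the Freudenthal weight diagram of $\pi^{\mathfrak{sl}_3}_{(k,l)}$, and summing these weights yields $A_{k,l}$; so $\omega_{(k,l)} = \pi^{\mathfrak{sl}_3}_{(k,l)}$ and $\Xi = A_2$, matching the curious identity in Section~\ref{sec:four}.

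The hard part is twofold. First, Step~2 has to be carried out case by case for each splint type: one must make the map $\lambda\mapsto\tilde\mu(\lambda)$ explicit, and ensure that no branching coefficient is over- or under-counted by the implicit correspondence in Theorem~1. Second, the conjecture as stated demands that $\Xi$ arise from a \emph{simple} Lie algebra, whereas $\Delta_2=(A_1)^{\oplus r}$ in Types (I) and (II). For those cases the construction above only produces $\omega_\lambda$ as a representation of a semisimple but non-simple Lie algebra, so either the conjecture has to be relaxed to allow semisimple auxiliary algebras, or a genuinely simple $\Xi$ must be identified via a dimension coincidence of the form $\prod_i \dim L_i = \dim L$ for some irrep $L$ of a simple Lie algebra. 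I expect reconciling the simplicity requirement in Types (I) and (II) to be the deeper obstacle; Types (III)–(V), where $\Delta_2$ is already simple ($D_r$ or $D_4$ or $A_2$), should fall out of the Theorem~1 argument once the bijection in Step~2 is checked.
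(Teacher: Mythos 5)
This statement is a conjecture: the paper offers no proof of it, only evidence assembled from the cases it can handle (Types (I) and (II) via Gelfand--Tsetlin patterns, Type (IV) via Theorem \ref{g2a2thm}), together with the explicit warning at the end of Section \ref{strat} that for Type (V) the auxiliary root system $\Xi$ is \emph{not} $\Delta_2$. Your reading of the typo (the sum should run over $\nu$, not $\lambda$) is right, and your sanity check for Type (IV) --- that the layered hexagon is the weight diagram of the $\mathfrak{sl}_3$-module $\pi_{k,l}$, so $\sum_\nu b_{\lambda,\nu}=A_{k,l}$ --- matches the paper. Your observation that $(A_1)^{\oplus r}$ is only semisimple, so the word ``simple'' in the conjecture needs relaxing for Types (I)--(II), is also a fair criticism of the statement.

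The genuine gap is in your Step 2, and it is fatal to the claim in your last paragraph that Types (III)--(V) ``should fall out of the Theorem 1 argument.'' Your argument, if it worked, would prove $\Xi=\Delta_2$ uniformly for every splint; the paper explicitly asserts this fails for Type (V), and its own Section \ref{here} data confirms it: the coefficient sums for $\Ree\Pi_{k,0,0,0}$ and $\Ree\Pi_{0,0,0,k}$ are $\binom{k+3}{3}$ and $\binom{k+4}{4}$ respectively (e.g.\ $4$ and $5$ for $k=1$), and these are not dimensions of irreducible $D_4$-modules. The correspondence in Theorem 1 between the branching support $\{\nu : b_{\lambda,\nu}>0\}$ and the weight diagram of a \emph{single} irreducible $\Delta_2$-module with a dominant highest weight $\tilde\mu(\lambda)$ is exactly what cannot be taken for granted: it depends on how the rescaled embedding $\phi$ of $\Delta_2$ interacts with the weight lattice of $\g$, and for the short-root copy of $D_4$ inside $F_4$ the assignment $\lambda\mapsto\tilde\mu(\lambda)$ does not land on a dominant integral weight whose weight diagram reproduces the branching multiplicities. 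So the bijection you assume in Step 2 must itself be proved case by case, and it is false in at least one case; no uniform proof of the conjecture via Theorem 1 is available, which is precisely why the paper leaves the statement as a conjecture with an unspecified auxiliary $\Xi$.
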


The Gelfand-Tsetlin patterns for Types (I) and (II) imply that $\Xi$ can be taken to be $\Delta_2$. For example, if we index the irreducible representations of $B_3$ and $D_3$ by three positive integers after choosing the standard fundamental weights, then the Gelfand-Tsetlin pattern for $\Resss$ can be written as
	$$
    \Resss \Pi_{a,b,c} = \bigoplus_{t=0}^a\bigoplus_{r=0}^b\bigoplus_{s=0}^c \pi_{a+b-t-r,r+s,r+c-s}.
    $$
In this case, the sum of coefficients equals $\dim \omega_{a,b,c}$, where $\omega_{a,b,c}$ is the highest weight representation of $A_1^{\oplus 3}$ corresponding to the integers $a,b,c$.

The branching rule for Type (IV) proven in the next section will imply that $\Xi=\Delta_2$ as well, and we conjecture this is also the case for Type (III). However, the discussion in Section \ref{here} tells us this is not the case for Type (V).

\section{Branching rule for Type (IV)}

In this section we work out $\Res$ explicitly. We first give an explicit formula for the functor $\Ress$ without using Gelfand-Tsetlin patterns in order to illustrate the ideas used in understanding $\Res$.

\subsection{Branching rule for Type (II) with \texorpdfstring{$r=2$}{}}

As $D_2$ embeds into $B_2$ via the long roots, it is natural to ask how their irreducible representations are related. The starting point is to compute their Weyl character formulas. To do this we label roots $L_1,L_2$, and all the positive roots, as below.
$$
  \begin{tikzpicture}[scale=0.8]
    \foreach\ang in {0,90,180,270}{
     \draw[->,thick] (0,0) -- (\ang:2cm);
    }
    \foreach\ang in {45,135,225,315}{
     \draw[->,thick] (0,0) -- (\ang:2.828cm);
    }
    \node[anchor=west,scale=1] at (2,0) {$L_1$};
    \node[anchor=south,scale=1] at ({2*cos(90)},{2*sin(90)}) {$L_1+L_2$};
    \node[anchor=south west,scale=1] at ({2.828*cos(45)},{2.828*sin(45)}) {$2L_1+L_2$};
    \node[anchor=south east,scale=1] at ({2.828*cos(135)},{2.828*sin(135)}) {$L_2$};
  \end{tikzpicture}
$$
The fundamental weights $\omega_1,\omega_2$ and $\Omega_1,\Omega_2$ for $B_2$ and $D_2$ are
	$$
    \omega_1=L_1+L_2,\qquad \omega_2=\frac{2L_1+L_2}{2},\qquad \Omega_1=\frac{L_2}{2},\qquad \Omega_2=\frac{2L_1+L_2}{2},
    $$
and the half sum of the positive roots for $B_2$ and $D_2$ are
	$$
    \rho_{B_2} = 2L_1+\frac{3L_2}{2},\qquad \rho_{D_2}=L_1+L_2.
    $$
Define $\Pi_{k,l}$ to be the highest weight representation of $B_2$ with weight $k\omega_1+l\omega_2=(2k+2l)L_1/2+(2k+l)L_2/2$, and define $\pi_{\alpha,\beta}$ to be the highest weight representation of $D_2$ with weight $\alpha\Omega_1+\beta\Omega_2=\beta L_1 + (\alpha + \beta) L_2/2$. By writing $x_1=e^{L_1+L_2/2}$ and $x_2=e^{L_2/2}$, we have the following explicit formulas for the characters of $\Pi_{k,l}$ and $\pi_{\alpha,\beta}$:
	$$
    \chi(\Pi_{k,l}) = 
    \frac{A_{k,l,B_2}}{\delta_{B_2}},\qquad
    \chi(\pi_{\alpha,\beta})=\frac{A_{\alpha,\beta,D_2}}{\delta_{D_2}},
    $$
where
\begin{align*}
    A_{k,l,B_2} &= 
    x_1^{k+l+2}x_2^{k+1}
    + x_2^{k+l+2}x_1^{-(k+1)}
    + x_1^{-(k+l+2)}x_2^{-(k+1)}
    + x_2^{-(k+l+2)}x_1^{k+1}\\
    &\qquad -x_2^{-(k+l+2)}x_1^{-(k+1)}
    - x_1^{k+l+2}x_2^{-(k+1)}
    - x_2^{k+l+2}x_1^{k+1}
    - x_1^{-(k+l+2)}x_2^{k+1}, \\
    \delta_{B_2} &= (x_1-x_1^{-1})(x_2-x_2^{-1})(x_1+x_1^{-1}-x_2-x_2^{-1}), \\
    A_{\alpha,\beta,D_2} &= 
    (x_2^{\alpha+1}-x_2^{-(\alpha+1)})(x_1^{\beta+1}-x_1^{-(\beta+1)}), \\
    \delta_{D_2} &= (x_1-x_1^{-1})(x_2-x_2^{-1}).
    \end{align*}
Finally let us write down the Weyl dimension formulas for $B_2$ and $D_2$:
	\begin{align*}
    \dim \Pi_{k,l} &= \frac{(k+1)(l+1)(k+l+2)(2k+l+3)}{6} \\
    \dim \pi_{\alpha,\beta} &= (\alpha+1)(\beta+1)
    \end{align*}

\begin{prop} \label{ressthm}
We have
	$$
    \Ress \Pi_{k,l} = \bigoplus_{r=0}^k\bigoplus_{s=0}^l \pi_{r+s,r+l-s}.
    $$
\end{prop}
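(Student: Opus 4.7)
Following the strategy outlined in Section~\ref{strat}, my plan is to multiply both sides of the claimed identity $\chi(\Pi_{k,l}) = \sum_{r=0}^k\sum_{s=0}^l \chi(\pi_{r+s, r+l-s})$ by $\delta' := \delta_{B_2}/\delta_{D_2} = x_1 + x_1^{-1} - x_2 - x_2^{-1}$ and verify the resulting Laurent polynomial identity in $x_1^{\pm 1}, x_2^{\pm 1}$. As a preliminary sanity check, one can confirm the dimension identity $\dim \Pi_{k,l} = \sum_{r=0}^k\sum_{s=0}^l (r+s+1)(r+l-s+1)$ directly from the Weyl dimension formulas printed just before the proposition.

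For the character identity itself, the two key inputs are (a) a compact rewriting of $\delta'\chi(\Pi_{k,l})$, and (b) a short recursion for $\delta'\chi(\pi_{\alpha,\beta})$. For (a), writing $[n]_x := x^n - x^{-n}$ and grouping the eight monomials of $A_{k,l,B_2}$ according to whether their $x_1$-exponent is $\pm(k+l+2)$ or $\pm(k+1)$ collapses the expression to
$$A_{k,l,B_2} = [k+l+2]_{x_1}\,[k+1]_{x_2} \;-\; [k+l+2]_{x_2}\,[k+1]_{x_1}.$$
Dividing by $\delta_{D_2} = [1]_{x_1}[1]_{x_2}$ and using $\chi(\pi_{\alpha,\beta}) = [\alpha+1]_{x_2}[\beta+1]_{x_1}/\delta_{D_2}$ then gives $\delta'\chi(\Pi_{k,l}) = \chi(\pi_{k,k+l+1}) - \chi(\pi_{k+l+1,k})$. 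For (b), the elementary identity $(x+x^{-1})[n]_x = [n+1]_x + [n-1]_x$ yields
$$\delta'\chi(\pi_{\alpha,\beta}) = \chi(\pi_{\alpha,\beta+1}) + \chi(\pi_{\alpha,\beta-1}) - \chi(\pi_{\alpha+1,\beta}) - \chi(\pi_{\alpha-1,\beta}),$$
adopting the convention that a character with a $-1$ index is zero.

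The remaining step is a double telescoping. Substituting the recursion into the double sum, for each fixed $r$ the pair of contributions $\chi(\pi_{r+s,r+l-s+1}) - \chi(\pi_{r+s+1,r+l-s})$ telescopes in $s$ to $\Phi(r) := \chi(\pi_{r,r+l+1}) - \chi(\pi_{r+l+1,r})$, while the companion pair $\chi(\pi_{r+s,r+l-s-1}) - \chi(\pi_{r+s-1,r+l-s})$ telescopes in $s$ to $-\Phi(r-1)$. The outer $r$-sum then telescopes to $\Phi(k) - \Phi(-1) = \Phi(k)$, matching (a). The delicate bookkeeping sits in this last telescoping, because the boundary values at $r=0$, $s=0$, and $s=l$ produce terms of the form $\chi(\pi_{\cdot,-1})$ and $\chi(\pi_{-1,\cdot})$ that must either cancel against each other or vanish by the zero-index convention; this is the only place where the argument is non-routine, but once the recursion in (b) is written down it goes through mechanically.
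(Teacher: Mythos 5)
Your argument is correct and coincides with the paper's second proof of Proposition~\ref{ressthm}: the identity $\delta'\chi(\Pi_{k,l})=\chi(\pi_{k,k+l+1})-\chi(\pi_{k+l+1,k})$ obtained from the factorization of $A_{k,l,B_2}$, together with the four-term recursion for $\delta'\chi(\pi_{\alpha,\beta})$, is exactly what the paper uses (note $\delta'=\chi(\pi_{0,1})-\chi(\pi_{1,0})$). The only difference is that you carry out the double telescoping explicitly where the paper asserts it can be ``easily checked,'' and your bookkeeping of the boundary terms via the zero-index convention is accurate.
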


\begin{cor}
The number of irreducible representations of $D_2$ in the decomposition of $\Ress \Pi_{k,l}$ equals 
	$$
    \dim \pi_{k,l} = (k+1)(l+1).
    $$
\end{cor}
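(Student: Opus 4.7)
The plan is to deduce the corollary as a direct counting consequence of Proposition \ref{ressthm}. That proposition asserts the decomposition
$$\Ress \Pi_{k,l} = \bigoplus_{r=0}^k \bigoplus_{s=0}^l \pi_{r+s,\,r+l-s},$$
so the number of irreducible summands (counted with their multiplicities from this direct sum) is at most the number of index pairs $(r,s)$ in the ranges $0 \le r \le k$, $0 \le s \le l$, namely $(k+1)(l+1)$. The first step is simply to invoke the proposition and read off this upper bound.

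To make the count exact I need to confirm that distinct pairs $(r,s)$ yield distinct highest weights of $D_2$, so that no collapsing of summands occurs. This is a short linear-algebra check: setting $a = r+s$ and $b = r+l-s$, one recovers $r = (a+b-l)/2$ and $s = (a-b+l)/2$ uniquely from $(a,b)$, so the map $(r,s) \mapsto (r+s,\,r+l-s)$ is injective. Consequently each $\pi_{a,b}$ appearing on the right-hand side of Proposition \ref{ressthm} does so with multiplicity exactly one, and the total number of irreducible constituents is precisely $(k+1)(l+1)$.

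The final step is to compare with the Weyl dimension formula for $D_2$ already displayed just before the proposition, which gives $\dim \pi_{k,l} = (k+1)(l+1)$. This matches the count and completes the proof. There is no serious obstacle here; the only subtlety worth flagging is the brief injectivity verification, without which one might worry that two distinct pairs $(r,s)$ could index the same irreducible representation of $D_2$ and thereby undercount the number of summands.
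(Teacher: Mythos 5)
Your argument is correct and follows the same route as the paper, which simply notes that the corollary is an immediate consequence of Proposition \ref{ressthm} together with the $D_2$ dimension formula $\dim\pi_{k,l}=(k+1)(l+1)$. Your extra injectivity check of $(r,s)\mapsto(r+s,\,r+l-s)$ is a reasonable small addition (it confirms all $(k+1)(l+1)$ summands are pairwise non-isomorphic) but does not change the substance of the proof.
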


This corollary is an immediate consequence of Proposition \ref{ressthm}, so we just need to prove the above theorem. For this case we can simply use a telescoping sum argument to compute the Weyl character formula on both sides, but in general we want approaches that will allow us to deduce the decomposition from our computations. To this end we give two approaches to the proof: the first approach is bare-hands computation, and the second approach is an explicit computation using the strategy described in the previous section. 

\begin{proof}[Proof 1]
Factor $A_{k,l,B_2}$ as
    $$
    A_{k,l,B_2} = 
    (x_2^{k+1}-x_2^{-(k+1)})(x_1^{k+l+2}-x_1^{-(k+l+2)})-
    (x_1^{k+1}-x_1^{-(k+1)})(x_2^{k+l+2}-x_2^{-(k+l+2)}).
    $$
Then we observe that
\begin{align*}
    \frac{A_{k,l,B_2}}{(x_1-x_1^{-1})(x_2-x_2^{-1})}
    &=
    (x_2^{k}+x_2^{k-2}+\dotsm+x_2^{-k})(x_1^{k+l+1}+x_1^{k+l-1}+\dotsm+x_1^{-(k+l+1)}) \\
     &\qquad -
        (x_1^{k}+x_1^{k-2}+\dotsm+x_1^{-k})(x_2^{k+l+1}+x_2^{k+l-1}+\dotsm+x_2^{-(k+l+1)}).
\end{align*}
We can view the above expressions as sums over the polynomial
    $$
    p(s,t)=x_2^{s}x_1^{t}-x_1^{s}x_2^{t}+x_2^{-s}x_1^{-t}-x_1^{-s}x_2^{-t},
    $$
where $s$ ranges over the nonnegative numbers in $\{k,k-2,\dots,-k\}$ and $t$ ranges over the nonnegative numbers in $\{k+l+1,k+l-1,\dots,-(k+l+1)\}$. Writing $u=x_1^{1/2}x_2^{1/2}$ and $v=x_1^{1/2}x_2^{-1/2}$, we can write
    \begin{align*}
    p(s,t) &= u^{s+t}v^{-s+t}-u^{s+t}v^{s-t}+u^{-s-t}v^{s-t}-u^{-s-t}v^{-s+t} \\
    &= (u^{s+t}-u^{-(s+t)})(v^{-s+t}-v^{-(-s+t)}).
    \end{align*}
Now, after observing $x_1+x_1^{-1}-x_2-x_2^{-1}=(u-u^{-1})(v-v^{-1})$, we get
\begin{align*}
    \frac{p(s,t)}{(u-u^{-1})(v-v^{-1})}
    &=
    (u^{s+t-1}+u^{s+t-3}+\dotsm+u^{-(s+t-1)})(v^{-s+t-1}+v^{-s+t-3}+\dotsm+v^{-(-s+t-1)}).
\end{align*}
By comparing this expression with the Weyl character formula for $D_2$
    \begin{align*}
    \frac{A_{\alpha,\beta,D_2}}{\delta_{D_2}}
    &=
    (x_2^{\alpha}+x_2^{\alpha-2}+\dotsm+x_2^{-\alpha})(x_1^{\beta}+x_1^{\beta-2}+\dotsm+x_1^{-\beta}) \\
    &= \sum_{\substack{a\in \{\alpha,\alpha-2,\dots,-\alpha\} \\ b\in \{\beta,\beta-2,\dots,-\beta\}}} u^{a+b}v^{-a+b},
    \end{align*}
we get what we want.
\end{proof}

\begin{proof}[Proof 2]
Write $\chi_{\alpha,\beta}=\chi(\pi_{\alpha,\beta})$. By factoring $A_{k,l,B_2}$ as above, we observe that
    $$
    \chi(\Pi_{k,l}) = \frac{\chi_{k,k+l+1}-\chi_{k+l+1,k}}{\chi_{0,1}-\chi_{1,0}}.
    $$
Note that
    $$
    (\chi_{0,1}-\chi_{1,0})\chi_{\alpha,\beta}
    =
    \chi_{\alpha,\beta+1}+\chi_{\alpha,\beta-1}-\chi_{\alpha+1,\beta}-\chi_{\alpha-1,\beta},
    $$
where the second term exists only when $\beta>0$, and the last term exists only when $\alpha>0$. On the $(\alpha,\beta)$-plane, this amounts to taking the weighted sum of the following four vertices, with sign as below.
    $$
    \begin{tikzpicture}
\draw[help lines, color=gray!20, dashed] (-1,-1) grid (3,3);
\draw[->,thick] (-1,-1)--(3,-1);
\draw[->,thick] (-1,-1)--(-1,3);
\draw (0,-1.25) node {\text{\small $\alpha-1$}};
\draw (1,-1.25) node {\text{\small $\alpha$}};
\draw (2,-1.25) node {\text{\small $\alpha+1$}};
\draw (-1.5,0) node {\text{\small $\beta-1$}};
\draw (-1.5,1) node {\text{\small $\beta$}};
\draw (-1.5,2) node {\text{\small $\beta+1$}};
\draw (2,1) node {$\bm{-}$};
\draw (0,1) node {$\bm{-}$};
\draw (1,2) node {$\bm{+}$};
\draw (1,0) node {$\bm{+}$};
\end{tikzpicture}
    $$
We can now easily check that
    $$
    \chi_{k,k+l+1}-\chi_{k+l+1,k}
    =
    \sum_{\alpha=r}^k \sum_{\beta=s}^l
    (\chi_{0,1}-\chi_{1,0})\chi_{r+s,r+l-s},
    $$
as desired.
\end{proof}

\subsection{Branching rule for Type (IV)} \label{g2a2}

Again $A_2$ embeds into $G_2$ via the long roots. We need to compute the Weyl character formula for $G_2$ and $A_2$. To do this we label roots $L_1,L_2,L_3$, and all the positive roots, as below.
$$
  \begin{tikzpicture}[scale=0.8]
    \foreach\ang in {30,90,...,330}{
     \draw[->,thick] (0,0) -- (\ang:2cm);
    }
    \foreach\ang in {0,60,...,300}{
     \draw[->,thick] (0,0) -- (\ang:3cm);
    }
    \node[anchor=south west,scale=1] at (3,0) {$L_1-L_2$};
    \node[anchor=south west,scale=1] at ({3*cos(60)},{3*sin(60)}) {$2L_1+L_2$};
    \node[anchor=south east,scale=1] at ({3*cos(120)},{3*sin(120)}) {$L_1+2L_2$};
    \node[anchor=south west,scale=1] at ({2*cos(30)},{2*sin(30)}) {$L_1$};
    \node[anchor=south,scale=1] at ({2*cos(90)},{2*sin(90)}) {$L_1+L_2$};
    \node[anchor=south east,scale=1] at ({2*cos(150)},{2*sin(150)}) {$L_2$};
    \node[anchor=north,scale=1] at ({2*cos(90)},{-2*sin(90)}) {$L_3$};
  \end{tikzpicture}
$$
We chose the labeling above because the action of the Weyl group $W_{A_2}\cong S_3$ on $L_1,L_2,L_3$ is simply by permuting the indices. The fundamental weights $\omega_1,\omega_2$ and $\Omega_1,\Omega_2$ for $G_2$ and $A_2$ are
	$$
    \omega_1=L_1+L_2,\qquad \omega_2=2L_1+L_2,\qquad \Omega_1=L_1+L_2,\qquad \Omega_2=L_1,
    $$
and the half sum of the positive roots are
	$$
    \rho_{G_2} = 3L_1+2L_2,\qquad \rho_{A_2}=2L_1+L_2.
    $$
Define $\Pi_{k,l}$ to be the highest weight representation of $G_2$ with weight $k\omega_1+l\omega_2=(k+2l)L_1+(k+l)L_2$, and define $\pi_{\alpha,\beta}$ to be the highest weight representation of $A_2$ with weight $\alpha\Omega_1+\beta\Omega_2=(\alpha+\beta)L_1+\alpha L_2$. By writing $x_i=e^{L_i}$, we have the following explicit formulas for the characters of $\Pi_{k,l}$ and $\pi_{\alpha,\beta}$:
	$$
    \chi(\Pi_{k,l}) = 
    \frac{A_{k,l,\mg}}{\delta_{\mg}},\qquad
    \chi(\pi_{\alpha,\beta})=\frac{A_{\alpha,\beta,\su(3)}}{\delta_{\su(3)}},
    $$
where
\begin{align*}
    A_{k,l,G_2} &= 
    x_1^{k+2l+3}x_2^{k+l+2}
    + x_3^{k+2l+3}x_1^{k+l+2}
    + x_2^{k+2l+3}x_3^{k+l+2} \\
    &\qquad+ x_1^{-(k+2l+3)}x_2^{-(k+l+2)}
    + x_2^{-(k+2l+3)}x_3^{-(k+l+2)}
    + x_3^{-(k+2l+3)}x_1^{-(k+l+2)} \\
    &\qquad - x_1^{k+2l+3}x_3^{k+l+2}
    - x_3^{k+2l+3}x_2^{k+l+2}
    - x_2^{k+2l+3}x_1^{k+l+2} \\
    &\qquad -x_1^{-(k+2l+3)}x_3^{-(k+l+2)}
    - x_2^{-(k+2l+3)}x_1^{-(k+l+2)}
    - x_3^{-(k+2l+3)}x_2^{-(k+l+2)},\\
    \delta_{G_2} &= (x_1-x_2)(x_1-x_3)(x_2-x_3)(1-x_1)(1-x_2)(1-x_3), \\
    A_{\alpha,\beta,A_2} &= 
    x_1^{\alpha+\beta+2}x_2^{\alpha+1}
    + x_3^{\alpha+\beta+2}x_1^{\alpha+1}
    + x_2^{\alpha+\beta+2}x_3^{\alpha+1} \\
    &\qquad -x_1^{\alpha+\beta+2}x_3^{\alpha+1}
    - x_3^{\alpha+\beta+2}x_2^{\alpha+1}
    - x_2^{\alpha+\beta+2}x_1^{\alpha+1}, \\
    \delta_{A_2} &= (x_1-x_2)(x_1-x_3)(x_2-x_3),
    \end{align*}
and $x_1,x_2,x_3$ satisfies the relation $x_1x_2x_3=1$.

\begin{exmp}
The Weyl character formula $\chi(\Pi_{0,1})$ for the adjoint representation $\Pi_{0,1}$ of $G_2$ is
	$$
	\frac{x_1^5x_2^3+ x_3^5x_1^3+ x_2^5x_3^3+x_1^{-5}x_2^{-3}+ x_2^{-5}x_3^{-3}+ x_3^{-5}x_1^{-3}
    -x_1^5x_3^3- x_3^5x_2^3- x_2^5x_1^3-x_1^{-5}x_3^{-3}- x_2^{-5}x_1^{-3}- x_3^{-5}x_2^{-3}}{(x_1-x_2)(x_1-x_3)(x_2-x_3)(1-x_1)(1-x_2)(1-x_3)}.
    $$
We can check using the relation $x_1x_2x_3=1$ that the above expression equals the polynomial
	$$
    x^2y+y^2z+z^2x+xy^2+yz^2+zx^2+xy+yz+zx+x+y+z+2.
    $$
Notice that the above polynomial equals $\chi(\pi_{0,1})+\chi(\pi_{1,0})+\chi(\pi_{1,1})$, so we see that
	$$
    \Res\Pi_{0,1}=\pi_{0,1}\oplus \pi_{1,0}\oplus \pi_{1,1}.
    $$
\end{exmp}

Finally let us write down the Weyl dimension formulas for $G_2$ and $A_2$.
	\begin{align*}
    \dim \Pi_{k,l} &= \frac{(k+1)(k+l+2)(2k+3l+5)(k+2l+3)(k+3l+4)(l+1)}{120} \\
    \dim \pi_{\alpha,\beta} &= \frac{(\alpha+1)(\beta+1)(\alpha+\beta+2)}{2}
    \end{align*}

\begin{thm} \label{g2a2thm}
Let $\Pi_{k,l}$ be the irreducible representation of $G_2$ with weight $k\omega_1+l\omega_2$, and let $\pi_{\alpha,\beta}$ be the irreducible representation of $A_2$ with weight $\alpha\Omega_1+\beta\Omega_2=(\alpha+\beta)L_1+\alpha L_2$. Then
	$$
    \Res \Pi_{k,l} = \bigoplus_{\alpha,\beta} n_{\alpha,\beta} \pi_{\alpha,\beta},
    $$
where $(\alpha,\beta)$ are integral points on and inside of the hexagon
	$$
    \begin{tikzpicture}
\draw (3,0) -- (1,0) node[below left=-0.08cm] {(l,0)}
 -- (0,1) node[left] {(0,l)}
      -- (0,3) node[above left=-0.08cm] {(0,k+l)}
 -- (1,3) node[above right=-0.08cm] {(l,k+l)}
--  (3,1) node[right] {(k+l,l)}
           --  (3,0) node[below right=-0.08cm] {(k+l,0)};
\end{tikzpicture}
    $$
with vertices
	$$
    (k+l,l),(k+l,0),(l,0),(0,l),(0,k+l),(l,k+l),
    $$
and $n_{\alpha,\beta}$ are positive integers determined as follows.
	\begin{itemize}
    \item If $(\alpha,\beta)$ lies on the perimeter (the zeroth layer) of the hexagon $H$ above, then $n_{\alpha,\beta}=1$.
    \item If $(\alpha,\beta)$ lies on the first layer of $H$ (which are points adjacent to the perimeter), then $n_{\alpha,\beta}=2$.
    \item Iterating, if $(\alpha,\beta)$ lies on the $j^{th}$ layer of $H$, and if this $j^{th}$ layer is still a hexagon, then $n_{\alpha,\beta}=j+1$.
    \item The hexagon $H$ degenerates at the $m^{th} = min(k,l)^{th}$ layer to a triangle with vertices $(l,k),(k,l),(l,l)$ (or possibly the single point $(l,l)$ if $k = l$). Set $n_{\alpha,\beta}=m+1$ for all points $(\alpha,\beta)$ on this triangle.
    \end{itemize}
\end{thm}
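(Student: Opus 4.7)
Following the strategy of Section~\ref{strat} (see Proof 2 of Proposition~\ref{ressthm}), the plan is to factor the Weyl denominator of $G_2$, reduce to a character identity in $A_2$, and then to a combinatorial identity for the multiplicities $n_{\alpha,\beta}$. Directly from the explicit formulas, $\delta_{G_2} = \delta_{A_2}\cdot \delta'$ with $\delta' = (1-x_1)(1-x_2)(1-x_3)$. Using the relation $x_1x_2x_3 = 1$ one computes $\chi(\pi_{0,1}) = x_1+x_2+x_3$ and $\chi(\pi_{1,0}) = x_2x_3+x_1x_3+x_1x_2$, so $\delta' = \chi(\pi_{1,0}) - \chi(\pi_{0,1})$.

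Similarly, using $x_i^{-1} = x_jx_k$ whenever $\{i,j,k\} = \{1,2,3\}$, one converts each of the six negative-exponent monomials in $A_{k,l,G_2}$ into a positive-exponent monomial; the twelve terms then assemble into two $S_3=W_{A_2}$-antisymmetrizations, and sorting exponents yields
$$
A_{k,l,G_2} = A_{k+l+1,\,l,\,A_2} - A_{l,\,k+l+1,\,A_2}.
$$
Dividing by $\delta_{A_2}$, this gives $\chi(\Pi_{k,l})\cdot\delta' = \chi(\pi_{k+l+1,l}) - \chi(\pi_{l,k+l+1})$, so the theorem is equivalent to the identity
$$
\Big(\sum_{\alpha,\beta} n_{\alpha,\beta}\chi(\pi_{\alpha,\beta})\Big)\bigl(\chi(\pi_{1,0}) - \chi(\pi_{0,1})\bigr) = \chi(\pi_{k+l+1,l}) - \chi(\pi_{l,k+l+1}).
$$

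Expand the left side using the $A_2$ Pieri rules (special cases of Littlewood-Richardson for $S_3$):
$$
\chi(\pi_{1,0})\chi(\pi_{\alpha,\beta}) = \chi(\pi_{\alpha+1,\beta}) + \chi(\pi_{\alpha-1,\beta+1}) + \chi(\pi_{\alpha,\beta-1}),
$$
$$
\chi(\pi_{0,1})\chi(\pi_{\alpha,\beta}) = \chi(\pi_{\alpha,\beta+1}) + \chi(\pi_{\alpha+1,\beta-1}) + \chi(\pi_{\alpha-1,\beta}),
$$
with the convention $\chi(\pi_{\alpha,\beta}) = 0$ whenever $\alpha = -1$ or $\beta = -1$. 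Thus $\chi(\pi_{\alpha,\beta})\cdot\delta'$ is a signed sum over the six hexagonal neighbors of $(\alpha,\beta)$ in the $A_2$ weight lattice. Comparing coefficients of $\chi(\pi_{\alpha',\beta'})$ on both sides reduces the theorem to the combinatorial identity
$$
\big[n_{\alpha'-1,\beta'} + n_{\alpha'+1,\beta'-1} + n_{\alpha',\beta'+1}\big] - \big[n_{\alpha'-1,\beta'+1} + n_{\alpha',\beta'-1} + n_{\alpha'+1,\beta'}\big] = \begin{cases}+1 & (\alpha',\beta')=(k+l+1,l),\\ -1 & (\alpha',\beta')=(l,k+l+1),\\ 0 & \text{otherwise,}\end{cases}
$$
where $n_{\alpha,\beta}$ is extended by $0$ outside the hexagon.

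Verifying this last combinatorial identity is the main obstacle. The cleanest approach is to use the uniform description $n_{\alpha,\beta} = 1 + \min_{i=1,\ldots,6} d_i(\alpha,\beta)$ on the hexagon, where $d_i$ is the signed distance from $(\alpha,\beta)$ to the $i$-th edge. Then one argues by case analysis on the position of $(\alpha',\beta')$. For points well inside a layer, the six hexagonal neighbors are symmetrically positioned relative to the layer structure, so the alternating sum cancels. At points on or just outside the hexagon boundary, neighbors outside contribute $0$, and a direct check shows that the surviving contributions come only from the two vertices $(k+l,l)$ and $(l,k+l)$ of the outer hexagon, whose shifts by $(+1,0)$ and $(0,+1)$ produce the target points with contributions $\pm 1$. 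The bookkeeping is most delicate near vertices and, when $k\ne l$, at the layer where the hexagon degenerates to a triangle, where several of the $d_i$ simultaneously realize the minimum.
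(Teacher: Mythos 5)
Your reduction is sound and closely parallels the paper's: rewriting everything as $A_2$-characters (equivalently, Schur functions in three variables with $x_1x_2x_3=1$), the identity $\chi(\Pi_{k,l})\,\delta' = \chi(\pi_{k+l+1,l})-\chi(\pi_{l,k+l+1})$ and the two Pieri expansions are exactly what the paper uses; your coefficient-comparison formulation is just the transpose of the paper's direct expansion of $\sum n_{\alpha,\beta}\,(s_{1,1,0}-s_{1,0,0})\,s_{\alpha+\beta,\alpha,0}$. The problem is what comes after. First, the closed form you propose as the basis for the case analysis, $n_{\alpha,\beta}=1+\min_i d_i(\alpha,\beta)$ with $d_i$ the lattice distances to the six edges, does not agree with the multiplicities in the theorem: the theorem assigns the constant value $m+1=\min(k,l)+1$ to \emph{every} point on and inside the degenerate triangle, whereas the min formula keeps growing toward the triangle's interior. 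The two descriptions differ as soon as the triangle has interior lattice points, i.e. whenever $|k-l|\geq 3$. For instance, for $(k,l)=(5,1)$ the point $(2,2)$ lies strictly inside the triangle with vertices $(1,5),(5,1),(1,1)$; the theorem gives $n_{2,2}=2$ while $1+\min_i d_i=3$. The dimension count confirms the theorem's rule and refutes the min rule: the hexagon has $18$ perimeter points and $15$ triangle points, so $\sum n_{\alpha,\beta}=1\cdot 18+2\cdot 15=48=\dim\pi_{5,1}$, whereas the min rule gives $18+2\cdot 12+3\cdot 3=51$. So the case analysis you outline would be verifying a false identity in exactly the regime where the bookkeeping is delicate.

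Second, even with the correct $n_{\alpha,\beta}$, the verification of the combinatorial identity is the entire content of the proof, and you have only sketched it. The observation that the alternating sum cancels for points well inside a layer is correct where $n$ is locally affine (the three positive displacement vectors $(-1,0),(1,-1),(0,1)$ sum to zero, as do the three negative ones), but all of the work is concentrated at the non-smooth locus: the six vertices of each hexagonal layer, the outer boundary, and the degenerate triangle. The paper handles this with two lemmas --- an induction on $k-l$ showing $\sum_{(\alpha,\beta)\in L_l(k,l)}H_{\alpha,\beta}=H_{L_l(k,l)}$ for the triangular layer, and a three-case analysis over points of $f(L_{i+1})$, $f(L_i)$, $f(L_{i-1})$ showing $\sum_{(\alpha,\beta)\in L_i(k,l)}H_{\alpha,\beta}=H_{L_{i+2}}-2H_{L_{i+1}}+H_{L_i}$ --- followed by a telescoping sum against the weights $i+1$. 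An equivalent amount of vertex-by-vertex checking is unavoidable in your formulation, and it is precisely the part you defer as "the main obstacle."
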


\begin{cor}
The number of irreducible representations of $A_2$ in the decomposition of $\Res \Pi_{k,l}$ equals 
	$$
    \dim \pi_{k,l} = \frac{(k+1)(l+1)(k+l+2)}{2}.
    $$
\end{cor}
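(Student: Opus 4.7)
Taking Theorem~\ref{g2a2thm} as given, the plan is to verify the combinatorial identity $\sum_{\alpha,\beta} n_{\alpha,\beta} = (k+1)(l+1)(k+l+2)/2$ by a direct weighted lattice-point count, stratifying $H$ into its layers. Since both the hexagon $H$ and the target value $\dim\pi_{k,l}$ are symmetric under $k \leftrightarrow l$, I would first reduce to the case $l \leq k$, so that $m = l$.

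For each $0 \leq j < l$, the $j^{\text{th}}$ layer is a hexagonal ring with six sides of alternating lengths $k-j$ and $l-j$, which therefore carries $3(k-j)+3(l-j) = 3(k+l-2j)$ lattice points on its perimeter. The innermost $l^{\text{th}}$ layer is the degenerate filled triangle with vertices $(l,k), (k,l), (l,l)$, containing $(k-l+1)(k-l+2)/2$ lattice points. Weighting each lattice point on the $j^{\text{th}}$ layer by $n_{\alpha,\beta} = j+1$ gives the sum
\[
\sum_{j=0}^{l-1}(j+1)\cdot 3(k+l-2j) \;+\; (l+1)\cdot \frac{(k-l+1)(k-l+2)}{2}.
\]

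Applying the closed forms $\sum_{j=0}^{l-1}(j+1) = l(l+1)/2$ and $\sum_{j=0}^{l-1}j(j+1) = (l-1)l(l+1)/3$ reduces the hexagonal portion to $l(l+1)(3k-l+4)/2$. Extracting a common factor $(l+1)/2$ from the total leaves the bracket $l(3k-l+4) + (k-l+1)(k-l+2)$, and the main (essentially only) substantive step is to recognize that this bracket factors as $(k+1)(k+l+2)$; the total sum is then $(k+1)(l+1)(k+l+2)/2 = \dim\pi_{k,l}$. While the factorization is routine by expansion, I expect it to be the main algebraic hurdle, and I would cross-check with the small cases $k = l$, $k=1, l=0$, and the example $(k,l) = (3,2)$ depicted earlier in the paper.

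A conceptually cleaner alternative, which I would note in passing, is to identify the multiplicity pattern $n_{\alpha,\beta}$ with the weight multiplicities of the irreducible $\ssl_3$-representation $\pi_{k,l}$ itself: its weight diagram is a hexagon with the same alternating sides $k, l$ and the same layered multiplicity structure. Under this identification the corollary reduces to the standard fact that the dimension of an irreducible representation equals the sum of its weight multiplicities, and it exhibits $\pi_{k,l}$ as the auxiliary representation anticipated by Conjecture~\ref{conjj}.
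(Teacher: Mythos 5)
Your computation is correct --- I checked the algebra: the hexagonal layers contribute $\tfrac{l(l+1)}{2}(3k-l+4)$, the triangle contributes $(l+1)\tfrac{(k-l+1)(k-l+2)}{2}$, and the bracket $l(3k-l+4)+(k-l+1)(k-l+2)$ does expand to $k^2+kl+3k+l+2=(k+1)(k+l+2)$, giving $\dim\pi_{k,l}$; the degenerate cases $l=0$ and $k=l$ are also handled consistently. The paper itself supplies no argument here: as with the $B_2/D_2$ corollary, it treats the statement as an immediate consequence of the branching theorem, so your layer-by-layer count is precisely the verification the paper omits rather than a competing method. Your closing observation is the more valuable one, and it is the paper's own implicit point of view: the multiplicity pattern $n_{\alpha,\beta}$ (hexagonal shells of alternating side lengths $k$ and $l$, multiplicities growing by one per shell until the shells become triangles and then stabilizing) is exactly the classical description of the weight multiplicities of an irreducible $\ssl_3$-module with highest weight built from $k$ and $l$, so $\sum n_{\alpha,\beta}=\dim\pi_{k,l}$ is the statement that a dimension is the sum of its weight multiplicities. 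This is the ``curious identity'' $\sum n_{\alpha,\beta}=A_{k,l}$ recorded in Section~\ref{sec:four} and is the Type~(IV) instance of Conjecture~\ref{conjj} with $\Xi=\Delta_2=A_2$; the elementary count you give in the first part can then be read as an independent proof of that classical multiplicity fact in rank two. Both routes are valid; the direct count is self-contained, while the identification explains \emph{why} the answer is a dimension.
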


In the remainder of this section we prove Theorem \ref{g2a2thm}. Again, it is enough to show that the characters of the two sides are equal.  In the numerator $A_{k,l,G_2}$ of the character formula for $G_2$, by separating the terms with positive exponents from those with negative exponents, we may check that  
    $$
    \chi(\Pi_{k,l})= \frac{
    \begin{vmatrix} 
    x_1^{k+2l+3} & x_2^{k+2l+3} & x_3^{k+2l+3} \\
    x_1^{k+l+2} & x_2^{k+l+2} & x_3^{k+l+2} \\
    1 & 1 & 1
    \end{vmatrix}
    +
    \begin{vmatrix}
    x_1^{-(k+2l+3)} & x_2^{-(k+2l+3)} & x_3^{-(k+2l+3)} \\
    x_1^{-(k+l+2)} & x_2^{-(k+l+2)} & x_3^{-(k+l+2)} \\
    1 & 1 & 1
    \end{vmatrix}
    }{(x_1-x_2)(x_1-x_3)(x_2-x_3)(1-x_1)(1-x_2)(1-x_3)}.
    $$
    
One might recognize now in $\chi(\Pi_{k,l})$ something resembling the well-known determinant-based definition of Schur functions on three variables:
    $$
    s_{a_1,a_2,a_3}(x_1,x_2,x_3) = \frac{
    \begin{vmatrix}
    x_1^{a_1+2} & x_2^{a_1+2} & x_3^{a_1+2} \\
    x_1^{a_2+1} & x_2^{a_2+1} & x_3^{a_2+1} \\
    x_1^{a_3} & x_2^{a_3} & x_3^{a_3} 
    \end{vmatrix}
    }{(x_1-x_2)(x_1-x_3)(x_2-x_3)}.
    $$

Clearly the first summand in the numerator combines with the first three factors in the denominator to make $s_{k+2l+1,k+l+1,0}(x_1,x_2,x_3)$.  To simplify the other summand, we use the fact that $x_1x_2x_3 = 1$ to write
    $$
    (x_1-x_2)(x_1-x_3)(x_2-x_3) = 
    -(x_1^{-1}-x_2^{-1})(x_1^{-1}-x_3^{-1})(x_2^{-1}-x_3^{-1})
    $$
and so we can now recognize the full equation as 
    $$
    \chi(\Pi_{k,l}) = \frac{s_{k+2l+1,k+l+1,0}(x_1,x_2,x_3) - s_{k+2l+1,k+l+1,0}(x_1^{-1},x_2^{-1},x_3^{-1})}
    {s_{1,1,0}(x_1,x_2,x_3)-s_{1,0,0}(x_1,x_2,x_3)}.
    $$
One can furthermore eliminate the term with negative exponents. Using \cite[Chapter 7, Exercise 41]{6}, when $x_1x_2x_3 = 1$, we have 
    $$
    s_{k+2l+1,k+l+1,0}(x_1^{-1},x_2^{-1},x_3^{-1}) = s_{k+2l+1,l,0}(x_1,x_2,x_3).
    $$
One can also rewrite the Weyl character formula of $A_2$ as a Schur function in the same manner. To summarize (suppressing the variables now that all have positive exponents):
    \begin{align*}
    \chi(\Pi_{k,l}) &= \frac{s_{k+2l+1,k+l+1,0}-s_{k+2l+1,k,0}}{s_{1,1,0}-s_{1,0,0}}, \\ 
    \chi(\pi_{\alpha,\beta}) &= s_{\alpha+\beta,\alpha,0}.
    \end{align*}
Thus, we must show that 
    $$
    s_{k+2l+1,k+l+1,0} - s_{k+2l+1,k,0} = \sum n_{\alpha,\beta}(s_{1,1,0}-s_{1,0,0})s_{\alpha+\beta,\alpha,0},
    $$
where the sum runs over those $(\alpha,\beta)$ described in the statement of Theorem \ref{g2a2thm}.

In order to simplify the Schur functions that will appear in further computations we also note the following lemma, which follows immediately from the determinant-based definition of Schur functions.

\begin{lem}

In the case that $x_1x_2x_3 = 1$, whenever $\alpha \geq \beta \geq \gamma$ are positive integers, we have
    $$
    s_{\alpha,\beta,\gamma}(x_1,x_2,x_3) = s_{\alpha-\gamma, \beta-\gamma, 0}(x_1,x_2,x_3)
    $$

\end{lem}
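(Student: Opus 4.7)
The plan is to prove the lemma directly from the determinantal formula for Schur functions in three variables that was displayed just above the lemma statement. Starting from
$$s_{\alpha,\beta,\gamma}(x_1,x_2,x_3) = \frac{\begin{vmatrix} x_1^{\alpha+2} & x_2^{\alpha+2} & x_3^{\alpha+2} \\ x_1^{\beta+1} & x_2^{\beta+1} & x_3^{\beta+1} \\ x_1^{\gamma} & x_2^{\gamma} & x_3^{\gamma} \end{vmatrix}}{(x_1-x_2)(x_1-x_3)(x_2-x_3)},$$
I would factor $x_i^{\gamma}$ out of the $i$-th column of the numerator. By multilinearity of the determinant, this extracts a scalar factor of $(x_1x_2x_3)^{\gamma}$ from the numerator, which equals $1$ by hypothesis. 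The remaining determinant has entries $x_i^{(\alpha-\gamma)+2}$, $x_i^{(\beta-\gamma)+1}$, $x_i^{0}$ in its three rows, and dividing by the same Vandermonde denominator $(x_1-x_2)(x_1-x_3)(x_2-x_3)$ gives exactly $s_{\alpha-\gamma,\,\beta-\gamma,\,0}(x_1,x_2,x_3)$.

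The only thing to check is well-definedness: the hypothesis $\alpha \geq \beta \geq \gamma$ guarantees that $(\alpha-\gamma, \beta-\gamma, 0)$ is still a weakly decreasing sequence of nonnegative integers, so the right-hand side is a legitimate Schur function. There is no real obstacle here; the lemma is essentially a one-line computation, which is why the authors flag it as immediate. The content is simply that multiplying all variables by a common monomial factor corresponds to shifting the partition by a constant, and when that monomial factor is $(x_1x_2x_3)^{\gamma} = 1$ the Schur function is unchanged.
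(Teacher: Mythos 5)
Your proof is correct and is precisely the computation the paper has in mind when it says the lemma ``follows immediately from the determinant-based definition of Schur functions'': factoring $x_i^{\gamma}$ out of each column extracts $(x_1x_2x_3)^{\gamma}=1$ and shifts the row exponents from $\alpha+2,\beta+1,\gamma$ to $(\alpha-\gamma)+2,(\beta-\gamma)+1,0$. Nothing to add.
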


Now, we expand the right-hand side of the equation by Pieri's Rule \cite[Chapter 7.15]{6}, which states that, for any partition $\mu$,
    $$
    s_{\mu}s_{1^k} = \sum s_{\lambda},
    $$
where the sum is taken over all partitions $\lambda$ whose Young diagram is formed from the Young diagram of $\mu$ by adding $k$ boxes into $k$ distinct rows.  We thus see (after using the previous lemma to simplify) that
    $$
    s_{1,1,0}s_{\alpha+\beta,\alpha,0} = s_{\alpha+\beta+1, \alpha+1,0} + s_{\alpha+\beta,\alpha-1, 0} + s_{\alpha+\beta-1,\alpha,0},
    $$
where the second summand does not exist if $\alpha = 0$, and the third summand does not exist if $\beta = 0$. Similarly we have
    $$
    s_{1,0,0}s_{\alpha+\beta,\alpha,0} = 
    s_{\alpha+\beta+1,\alpha,0} + 
    s_{\alpha+\beta,\alpha+1,0} +
    s_{\alpha+\beta-1,\alpha-1,0},
    $$
where the second summand does not exists if $\beta = 0$, and the third summand does not exist if $\alpha = 0$.

Rather than deal with the casework of sometimes excluding terms, in using both sums we will still use all three summands. However we still interpret $s_{a,b,c} = 0$ whenever we do not have $a \geq b \geq c \geq 0$.

We have reduced our goal to showing that
    $$
    s_{k+2l+1,k+l+1,0} - s_{k+2l+1,k,0} = 
    \sum n_{\alpha,\beta}H_{\alpha,\beta}
    $$
    where we define
    $$
    H_{\alpha,\beta} = s_{\alpha+\beta+1,\alpha+1,0} - 
    s_{\alpha+\beta,\alpha+1,0} + 
    s_{\alpha+\beta-1,\alpha,0} - 
    s_{\alpha+\beta-1,\alpha-1,0} + 
    s_{\alpha+\beta,\alpha-1,0} - 
    s_{\alpha+\beta+1,\alpha,0}.
    $$
For the remainder of the proof we will assume $k \geq l$ for ease of notation; the case for $k < l$ is exactly analogous.

It will be helpful to extend the notion of $H_{\alpha,\beta}$ to collections of points $(\alpha,\beta)$.  Let $L_i(k,l)$ denote the $i^{th}$ layer of the hexagon corresponding to $k$ and $l$ as described in the statement of Theorem \ref{g2a2thm}.  Note then that $L_i(k,l)$ for $0 \leq i < l$ is the boundary of the hexagon joining the six vertices
    $$(k+l-i,l),(k+l-i,i),(l,i),(i,l),(i,k+l-i),(l,k+l-i),$$
and $L_l(k,l)$ consists of the boundary and interior of the triangle with vertices $(k,l),(l,l),(l,k)$ (or possibly the single point $(l,l)$ if $k = l$).  We then define 
    $$
H_{L_i(k,l)} = s_{k+2l-i+1,k+l-i+1,0} - s_{k+l,k+l-i+1,0} + s_{l+i-1,l,0} - s_{l+i-1,i-1,0} + s_{k+l,i-1,0} - s_{k+2l-i+1,l,0}.
    $$

To better visualize all of  this, let us define $f(\alpha,\beta) = (\alpha+\beta, \alpha)$.  Then $H_{\alpha,\beta}$ consists of six Schur function summands whose corresponding points in the $(\alpha,\beta)$-plane are orthogonally or diagonally adjacent to $f(\alpha,\beta)$, with signs given by the following figure.
    $$
    \begin{tikzpicture}[scale=1.5]
\draw[help lines, color=gray!20, dashed]
(-1,-1) grid (3,3);
\draw[->,thick] (-1,-1)--(3,-1);
\draw[->,thick] (-1,-1)--(-1,3);
\draw (0,-1.25) node {\text{\small $\alpha+\beta-1$}};
\draw (1,-1.25) node {\text{\small $\alpha+\beta$}};
\draw (2,-1.25) node {\text{\small $\alpha+\beta+1$}};
\draw (-1.5,0) node {\text{\small $\alpha-1$}};
\draw (-1.5,1) node {\text{\small $\alpha$}};
\draw (-1.5,2) node {\text{\small $\alpha+1$}};
\draw (0,0) node {$-$};
\draw (0,1) node {$+$};
\draw (1,0) node {$+$};
\draw (1,2) node {$-$};
\draw (2,1) node {$-$};
\draw (2,2) node {$+$};
    \end{tikzpicture}
    $$
Likewise, if $S$ is a set of points in the $(\alpha,\beta)$-plane, we define         
    $$f(S)=\{(\alpha+\beta,\alpha) : (\alpha,\beta) \in S\}.$$
In particular, $f(L_i(k,l))$ is the boundary of the hexagon with vertices $$(k+2l-i,k+l-i),(k+l,k+l-i),(l+i,l),(l+i,i),(k+l,i),(k+2l-i,l),$$
and $f(L_l(k,l))$ is the boundary and interior of the triangle with vertices $(k+l,k),(2l,l),(k+l,l)$ (or just the single point $(2l,l)$ if $k = l$). Note that, for any $0 < i \leq l$, the summands of $H_{L_i(k,l)}$ correspond to the vertices of $f(L_{i-1}(k,l))$.

\begin{lem}
For $k\geq l$,
    $$
    \sum_{(\alpha,\beta) \in L_l(k,l)} H_{\alpha,\beta} = H_{L_l(k,l)}.
    $$
\end{lem}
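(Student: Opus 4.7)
The plan is to rewrite each $H_{\alpha,\beta}$ as a signed sum of six Schur functions indexed by $f$-images of the six hexagonal-lattice neighbors of $(\alpha,\beta)$ in the $A_2$ weight lattice, then interchange the order of summation in $\sum_{(\alpha,\beta) \in L_l(k,l)} H_{\alpha,\beta}$ and analyze each coefficient. Letting $N = \{(\pm 1, 0),\,(0, \pm 1),\,\pm(1,-1)\}$ be the set of hexagonal neighbor vectors, the six summands of $H_{\alpha,\beta}$ correspond to the six points $(\alpha,\beta) + v$ for $v \in N$, with signs alternating $+,-,+,-,+,-$ around the hexagon. After interchanging sums, the coefficient of $s_{f(\gamma,\delta)}$ in the total is a signed count (with these alternating signs) of the lattice points $(\alpha,\beta) \in L_l(k,l)$ that are neighbors of $(\gamma,\delta)$.

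The alternation forces this coefficient to vanish whenever all six neighbors of $(\gamma,\delta)$ lie in the filled triangle $L_l(k,l)$ (three $+$'s cancel three $-$'s) and, trivially, whenever none do. So the nonzero contributions can come only from a thin boundary strip consisting of the three edges of $L_l(k,l)$, its three vertices, and the exterior points adjacent to each. A short case check along this strip shows that at interior edge points of $L_l(k,l)$, at each of the three vertices $(l,l),(k,l),(l,k)$, and at exterior points midway along an edge-shadow, the contributing neighbors pair off into cancelling $+,-$ terms so that the coefficient is $0$. The $\alpha \leftrightarrow \beta$ reflection symmetry of the triangle reduces this work significantly.

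The only surviving contributions come from the six corner-adjacent exterior points
\[
(k+1,l),\; (k+1, l-1),\; (l, l-1),\; (l-1, l),\; (l-1, k+1),\; (l, k+1),
\]
each adjacent to $L_l(k,l)$ at exactly one vertex and contributing coefficient $+1,-1,+1,-1,+1,-1$ respectively. Since their $f$-images are precisely the six vertices of $f(L_{l-1}(k,l))$ and agree in sign with the corresponding Schur functions in $H_{L_l(k,l)}$, the identity follows. The main obstacle is the bookkeeping of the boundary configurations, of which there are roughly seven geometric types (edge interiors, vertices of $L_l(k,l)$, exterior midpoints, and corner-adjacent exterior points), but each reduces to summing at most three $\pm 1$ contributions from the sign table above.
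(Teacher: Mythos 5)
Your argument is correct, but it is a genuinely different route from the one the paper takes for this lemma. The paper argues by induction on $j=k-l$: the inductive step adjoins the new diagonal row $\{(l+i,\,l+J+1-i)\}_{i=0}^{J+1}$ to the triangle, expands the corresponding $H_{\alpha,\beta}$, and telescopes the resulting Schur sums against $H_{L_l(l+J,l)}$ to obtain $H_{L_l(l+J+1,l)}$. You instead interchange the order of summation and compute, for each lattice point $(\gamma,\delta)$, the signed number of its six hexagonal neighbors lying in the filled triangle; the alternating sign pattern kills every configuration except the six exterior points adjacent to the three corners, and their $f$-images are precisely the six vertices of $f(L_{l-1}(k,l))$, i.e.\ the six summands of $H_{L_l(k,l)}$, with matching signs (I verified $(k+1,l)$, $(k+1,l-1)$, $(l,l-1)$ directly; the remaining three follow by your reflection). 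This is essentially the same local-counting technique the paper itself deploys for the \emph{next} lemma, on the hexagonal layers $L_i(k,l)$ with $0\le i<l$ (its Cases 1--3), so your method has the merit of treating the triangular and hexagonal layers uniformly and of avoiding induction and base-case checks; the paper's telescoping induction is more mechanical but keeps the verification to explicit cancellation of eight terms per step. One small caution when writing this up: the reflection $\alpha\leftrightarrow\beta$ swaps the neighbor vectors $(1,0)$ and $(0,1)$ and hence \emph{reverses} the signs in your table, so it is a sign-reversing symmetry; this is consistent with the three $\pm$ pairs in your list of surviving corner contributions, but it must be stated explicitly when you use it to halve the casework.
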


\begin{proof}
Let $k = l+j$. We proceed by induction on $j$.  The cases $j = 0$ and $j = 1$ are easy to verify for any $l$. For the inductive step, suppose that for a fixed $j = J$ we have established the lemma.  We now wish to show that
    $$
    \sum_{(\alpha,\beta) \in L_l(l+J+1,l)} H_{\alpha,\beta} = H_{L_l(l+J+1,l)}.
    $$
    By the inductive hypothesis we have that 
    \begin{align*}
        \sum_{(\alpha,\beta) \in L_l(l+J,l)} H_{\alpha,\beta} 
        &= H_{L_l(l+J,l)}\\
        &= 
    s_{2l+J+1,l+J+1,0} - s_{2l+J,l+J+1,0} + s_{2l-1,l,0} - s_{2l-1,l-1,0} + s_{2l+J,l-1,0} - s_{2l+J+1,l,0}.
    \end{align*}
    To expand this sum to include all integer points of $L_l(l+J+1,l)$, we must add 
    $$
    \sum_{i=0}^{J+1} H_{l+i,l+J+1-i}.
    $$
    Doing so adds the following terms
    $$
    \sum_{i=0}^{J+1} (s_{2l+J,l+i,0}-s_{2l+J,l-1+i,0}) +
    (s_{2l+J+1,l-1+i,0}-s_{2l+J+1,l+1+i,0}) + 
    (s_{2l+J+2,l+1+i,0} - s_{2l+J+2,l+i,0}) 
    $$
to the previous sum, which equals the following eights terms
    \begin{align*}
    &s_{2l+J,l+J+1,0} - s_{2l+J,l-1,0} 
    + s_{2l+J+1,l-1,0} + s_{2l+J+1,l,0} \\
    &\qquad - s_{2l+J+1,l+J+1,0} - s_{2l+J+1,l+J+2,0}
    + s_{2l+J+2,l+J+2,0} - s_{2l+J+2,l,0}
    \end{align*}
after telescoping.

    Adding these terms to the sum in the inductive hypothesis and cancelling gives
    \begin{align*}
    \sum_{(\alpha,\beta) \in L_l(l+J+1,l)} H_{\alpha,\beta} &= 
        \begin{aligned}[t]
            &s_{2l+J+2,l+J+2,0} - s_{2l+J+1,l+J+2,0} +
    s_{2l-1,l,0} \\
    &\qquad - s_{2l-1,l-1,0} + 
    s_{2l+J+1,l-1,0} - s_{2l+J+2,l,0}
        \end{aligned}\\
    &= 
    H_{L_l(l+J+1,l)},
    \end{align*}
    as desired.
\end{proof}

We now need to prove an analogous lemma for the hexagonal layers.
 
\begin{lem}

For $0 \leq i < l$ we have
$$
\sum_{(\alpha,\beta) \in L_{i}(k,l)} H_{\alpha,\beta} = H_{L_{i+2}(k,l)} - 2H_{L_{i+1}(k,l)} + H_{L_{i}(k,l)}.
$$
(In the case $i = l-1$ we define $H_{L_{l+1}(k,l)} = 0$.)
 
\end{lem}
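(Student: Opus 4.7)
My plan is to prove the lemma by induction on $J = k - l \geq 0$ with $l$ and $i$ fixed, following the pattern of the preceding lemma's proof. The base cases $J = 0$ and $J = 1$ are handled by direct expansion: enumerate the points of $L_i(l, l)$ and $L_i(l+1, l)$, expand each $H_{\alpha,\beta}$ using the Pieri identities for $s_{1,1,0}\cdot s_{\alpha+\beta,\alpha,0}$ and $s_{1,0,0}\cdot s_{\alpha+\beta,\alpha,0}$ (which are the basic ingredients in the definition of $H_{\alpha,\beta}$), and verify that the result matches $H_{L_{i+2}} - 2H_{L_{i+1}} + H_{L_i}$ term by term.

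For the inductive step, I would compute the incremental contribution
$$\Delta := \sum_{(\alpha,\beta) \in L_i(l+J+1,\, l)} H_{\alpha,\beta} \;-\; \sum_{(\alpha,\beta) \in L_i(l+J,\, l)} H_{\alpha,\beta},$$
which depends only on the new lattice points introduced when $k \to k+1$. These new points lie along the three sides of the new hexagon that shift outward as $k$ increments---the right edge ($\alpha = k+l-i$), the top edge ($\beta = k+l-i$), and the upper-right diagonal ($\alpha+\beta = k+2l-i$)---together with the newly extended endpoints of the remaining three sides. Summing $H_{\alpha,\beta}$ over these new points telescopes to a short collection of boundary Schur-function terms. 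On the right-hand side, the corresponding difference of $H_{L_{i+2}(k,l)} - 2H_{L_{i+1}(k,l)} + H_{L_i(k,l)}$ when $k$ increments involves only those Schur summands of each $H_{L_j(k,l)}$ that explicitly depend on $k$; matching the two incremental expressions closes the induction.

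A more direct alternative is to decompose $L_i(k,l)$ into its six sides and evaluate $\sum H_{\alpha,\beta}$ side by side. Along each side (parametrized by one of $\alpha$, $\beta$, or $\alpha+\beta$ being constant), the six Schur summands of $H_{\alpha,\beta}$ organize so that consecutive contributions cancel in pairs, leaving only boundary Schur functions at the two endpoints of the side. Combining the six side-sums and cancelling further at the six vertices of $L_i$ where pairs of sides meet, the surviving Schur-function terms cluster near three concentric target hexagons $f(L_{i-1})$, $f(L_i)$, and $f(L_{i+1})$ and assemble into $H_{L_i(k,l)}$, $-2H_{L_{i+1}(k,l)}$, and $H_{L_{i+2}(k,l)}$ respectively. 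The factor of $-2$ on the middle contribution emerges because each vertex of $f(L_i)$ receives telescoping-endpoint contributions from two distinct sides of $L_i$, while vertices of $f(L_{i\pm 1})$ are each fed by only one side.

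The main obstacle in either approach is combinatorial sign bookkeeping---tracking roughly fifty Schur-function terms before cancellation---which is why the inductive organization into small incremental differences is attractive. The boundary subcase $i = l - 1$ requires no special treatment: substituting $j = l + 1$ into the defining formula for $H_{L_j(k,l)}$ yields six Schur-function terms that cancel pairwise, so the convention $H_{L_{l+1}(k,l)} = 0$ is automatic rather than ad hoc.
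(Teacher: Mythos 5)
Your ``more direct alternative'' is in substance the paper's own proof: the paper fixes a target point $(a,b)$, notes that contributions can only land on $f(L_{i-1}(k,l))$, $f(L_i(k,l))$, or $f(L_{i+1}(k,l))$, and checks that each non-vertex point receives exactly two contributions of opposite sign (hence cancels) while the vertex contributions assemble into $H_{L_i}$, $-2H_{L_{i+1}}$, and $H_{L_{i+2}}$; your explanation of the $-2$ (a vertex of $f(L_i)$ is fed by its two neighbours along $L_i$ with equal signs, whereas vertices of the inner and outer image hexagons are fed from one side only) is the same mechanism, merely organized by source side rather than by target point. Your observation that substituting $i=l+1$ into the closed formula for $H_{L_i(k,l)}$ yields six pairwise-cancelling Schur functions is correct and worth recording, but it does not make the case $i=l-1$ free of special treatment: one must still verify that every contribution landing on $f(L_l(k,l))$ --- a \emph{filled triangle} with three vertices, or the single point $(2l,l)$ when $k=l$, rather than a hexagon with six --- genuinely cancels, since the adjacency and sign pattern there differs from the hexagonal subcase. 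The paper devotes its Subcase 1.1 to exactly this verification.

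Your primary plan, induction on $J=k-l$, does not transfer from the preceding lemma, for two concrete reasons. First, the base case is not small: $L_i(l,l)$ is the full boundary of a hexagon with all six sides of length $l-i$, so ``direct expansion'' at $J=0$ is an infinite family of identities in $l$ and $i$ and is essentially as hard as the general statement (in the triangle lemma the base case $L_l(l,l)$ really is a single point). Second, for $i<l$ the layer $L_i(k,l)$ is only the \emph{boundary} of its hexagon, so unlike the filled triangle $L_l(k,l)$ it does not grow monotonically in $k$: incrementing $k$ pushes the three sides $\alpha=k+l-i$, $\beta=k+l-i$, $\alpha+\beta=k+2l-i$ outward, so $L_i(k+1,l)\not\supseteq L_i(k,l)$ and your increment $\Delta$ is not a sum over ``new points only'' --- it must also subtract $H_{\alpha,\beta}$ over the three discarded sides. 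Neither defect is unfixable (the added and removed sides each telescope), but the resulting bookkeeping is no lighter than in the direct pointwise argument, which is presumably why the paper uses induction only for the triangular core and argues the hexagonal layers directly.
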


\begin{proof}
The sum on the left-hand side can be viewed as taking the hexagon in our figure and sliding it along the hexagon defined by $f(L_i(k,l))$.  For any given $i$, note that any summand produced by the sum on the left hand side must be on or adjacent to $f(L_i(k,l))$, so the only Schur functions that can occur correspond to points on one of $f(L_{i+1}(k,l))$, $f(L_i(k,l))$, or $f(L_{i-1}(k,l))$ (where when $i = 0$, we define $L_{-1}(k,l)$ to be the hexagon surrounding $L_0(k,l)$ in the appropriate way).  All of these are hexagons except when $i = l-1$, in which case $f(L_l(k,l))$ is a triangle.  

The proof strategy is to split the points $(a,b)$ in these three layers into cases, and determine how often and with what sign each point occurs in some $H_{\alpha,\beta}$ in our sum. 

\textit{Case 1}:  The point $(a,b)$ lies on $f(L_{i+1}(k,l))$. 

Subcase 1.1: $i = l-1$.  In this case, if $k = l$, then $f(L_l(l,l))$ is the single point $(2l,l)$.  Then there are six $H_{\alpha,\beta}$ terms of our sum that produce $s_{2l,l,0}$, one for every point on the hexagon $L_{l-1}(l,l)$, and the summand $s_{2l,l,0}$ will appear in three of these terms with a positive sign and in three with a negative sign, and thus will get a total coefficient of 0. 

If $k > l$ then clearly we need only look at points on the boundary of the triangle $f(L_l(k,l))$.  Any point $(a,b)$ on the boundary that is not a vertex will be adjacent to three points of $f(L_{l-1}(k,l))$, and its corresponding summand $s_{a,b,0}$ will appear in two of the corresponding $H_{\alpha,\beta}$ of the sum with opposite signs, and so will vanish.  This is easy to check for each side of the triangle $f(L_l(k,l))$ separately; for example, if we take a point $(l+j,l) = f(l,j)$ for $l < j < k$, this will be adjacent to the three points 
    $$(l+j-1,l-1),(l+j,l-1),(l+j+1,l-1)$$
of $f(L_{l-1}(k,l))$, which correspond to $f(l-1,j),f(l-1,j+1),f(l-1,j+2)$. Of the corresponding terms in our sum, $H_{l-1,j}$ will contain $+s_{l+j,l,0}$, $H_{l-1,j+1}$ will contain $-s_{l+j,l,0}$, and $H_{l-1,j+2}$ does not contain $s_{l+j,l,0}$ at all.

If $(a,b)$ is a vertex of $f(L_l(k,l))$, then it is one of $f(k,l),f(l,l),$ or $f(l,k)$.  Again, we can check directly that if we take all adjacent points $(c,d) \in f(L_{l-1}(k,l))$ and sum the appearances of $s_{a,b,0}$ in $H_{f^{-1}(c,d)}$ we will get $0$.  Thus in the case $i = l-1$, our sum produces no Schur functions corresponding to points on $f(L_l(k,l))$, justifying our defining $H_{L_{l+1}(k,l)} = 0$.

Subcase 1.2: $i < l-1$.  Then $f(L_{i+1}(k,l))$ is a hexagon, and we consider any $(a,b)$ lying on it.  If $(a,b)$ is not a vertex of $f(L_{i+1}(k,l))$, it is adjacent to three points of $f(L_i(k,l))$, and it is easy to check that its summand will occur in two of the corresponding $H_{\alpha,\beta}$ with opposite signs and thus will vanish; this can be verified for each side of the hexagon separately like in Subcase 1.1.  

If $(a,b)$ is a vertex of $f(L_{i+1}(k,l))$, we end up with a different result than in Subcase 1.1.  These vertices are          
    $$(k+2l-i-1,k+l-i-1),(k+l,k+l-i-1),(l+i+1,l),(l+i+1,i+1),(k+l,i+1),(k+2l-i-1,l).$$
It can be checked for each of these points $(a,b)$ separately that if we look at the set of adjacent $(c,d)$ in $f(L_i(k,l))$, and sum over $H_{f^{-1}(c,d)}$ the coefficient of $s_{a,b,0}$, and then sum those results together, we get
    $$
s_{k+2l-i-1,k+l-i-1,0} - s_{k+l,k+l-i-1,0} + s_{l+i+1,l,0} - s_{l+i+1,i+1,0} + s_{k+l,i+1,0} - s_{k+2l-i-1,l,0} = H_{L_{i+2}(k,l)}
    $$
Thus, considering only terms corresponding to points of $f(L_{i+1}(k,l))$, we get that  $\sum_{(\alpha,\beta) \in L_i(k,l)} H_{\alpha,\beta}$ yields $H_{L_{i+2}(k,l)}$.

\textit{Case 2}: The point $(a,b)$ lies on $f(L_i(k,l))$.  If $(a,b)$ is not a vertex of $f(L_i(k,l))$, then it will be adjacent to two other points of $f(L_i(k,l))$, and will appear in the two corresponding $H_{\alpha,\beta}$ of the sum with opposite signs and vanish.  This can be checked on each side of the hexagon separately as in the previous case.

If $(a,b)$ is a vertex of $f(L_i(k,l))$, it is one of six possibilities.  We can again check the contribution of the $H_{\alpha,\beta}$ to each point separately to determine that, when restricted to $f(L_i(k,l))$, we get $-2H_{L_{i+1}(k,l))}$ out of $\sum_{(\alpha,\beta) \in L_i(k,l)} H_{\alpha,\beta}$.

\textit{Case 3}: The point $(a,b)$ lies on $f(L_{i-1}(k,l))$.  If $(a,b)$ is not a vertex of $f(L_{i-1}(k,l))$, then as with the other cases, it will appear in two $H_{\alpha,\beta}$ of the sum with opposite signs and vanish.  This can be checked on each side of the hexagon separately.

If $(a,b)$ is a vertex of $f(L_{i-1}(k,l))$, then as with the other cases, we can look at the contribution of the $H_{\alpha,\beta}$ of the adjacent points of $f(L_i(k,l))$ for each vertex separately, sum them, and get a contribution of $H_{L_i(k,l)}$.

Taking all three cases together, we have examined every point $(a,b)$ such that $s_{a,b,0}$ occurs as a summand (with either sign) of some $H_{\alpha,\beta}$ in our sum, and determined the coefficient of $s_{a,b,0}$ arising from summing over all applicable $H_{\alpha,\beta}$.  Thus, adding the results together from the three cases, we may conclude that
$$
\sum_{(\alpha,\beta) \in L_i(k,l)} H_{\alpha,\beta} = H_{L_{i+2}(k,l)} - 2H_{L_{i+1}(k,l)} + H_{L_i(k,l)},
$$
as desired.
\end{proof}

Now we can put this all together to finish proving Theorem \ref{g2a2thm}.  Let $H(k,l) = \bigcup_{i=0}^l L_i(k,l)$ be the hexagon of Theorem \ref{g2a2thm} associated with $(k,l)$.  Furthermore, using our new terminology $n_{\alpha,\beta} = i+1$ when $(\alpha,\beta) \in L_i(k,l)$. We wish to evaluate 
    $$
\sum_{(\alpha,\beta) \in H(k,l)} n_{\alpha,\beta}H_{\alpha,\beta} = \sum_{i=0}^l (i+1)\sum_{(\alpha,\beta) \in L_i(k,l)} H_{\alpha,\beta}.
    $$
Simplifying this using our lemmas gives us
    $$
(l+1)H_{L_l(k,l)} + \sum_{i=0}^{l-1} (i+1)\left( H_{L_{i+2}(k,l)} - 2H_{L_{i+1}(k,l)} + H_{L_{i}(k,l)}\right).
    $$
We can check that for each $2 \leq j \leq l$ the coefficient of $H_{L_j(k,l)}$ is $(j+1)-2j+(j-1)=0$, and for $j = 1$ the coefficient of $H_{L_1(k,l)}$ is also $0$.  The only summand that does not get cancelled is a single 
    $$
    H_{L_0(k,l)} = 
    s_{k+2l+1,k+l+1,0} - s_{k+l,k+l+1,0} +
    s_{l-1,l,0} - s_{l-1,-1,0} +
    s_{k+l,-1,0} - s_{k+2l+1,l,0}
    $$
The middle four terms in this expression are $0$, so all that remains is the desired $s_{k+2l+1,k+l+1,0} - s_{k+2l+1,l,0}$, and we are done.

\section{Conjectures for other cases} \label{here}

In the final section of the paper we provide some conjectural descriptions for the Types (III) and (V) branching rules. These conjectures are combinatorial in nature, in agreement with Conjecture \ref{conjj}, and are derived by comparing dimensions between irreducible representations of $\Delta$ and $\Delta_1$. 


\subsubsection*{Type (III) with $r=3$}

Let $A = a+1, B = b+1, C= c+1$, where $a,b,c$ are positive integers. Then the irreducible representation of highest weight $(a,b,c)$ in $C_3$ has dimension
$$\dim\Pi_{a,b,c} = \frac{ABC}{720}(A+B)(B+C)(A+B+C)(B+2C)(A+B+2C)(A+2B+2C).$$
For $(A_1)^3$, one has
    $$\dim \pi_{a,b,c}=ABC.$$ 
Let $\Pi$ denote a representation of $C_3$, and $\pi$ one of $(A_1)^{\oplus 3}$.
First and most simply we have the formula
    $$\nores\Pi_{a,0,0} = \bigoplus_{r+s+t=a} \pi_{r,s,t}$$
If we let $T_k$ be the set of triples $(r,s,t)$ of integers $0 \leq r,s,t \leq k$ with $r+s+t = 2k$, then
    $$\nores \Pi_{0,b,0} = \bigoplus_{k=0}^b \bigoplus_{(\tau_1,\tau_2,\tau_3) \in T_k} (b-k+1)\pi_{\tau_1,\tau_2,\tau_3}.$$
We can in fact write down a branching rule for $\nores \Pi_{a,b,0}$ that is very reminiscent of the one from $G_2$ to $A_2$. Assume $a, b > 0$, so that all three coordinates of $(a+b,b,0)$ are distinct. Then the formula for $\nores \Pi_{a,b,0}$ will be a direct sum of irreducible representations of $(A_1)^{\oplus 3}$ with highest weights counted as follows: 
    \begin{itemize}
        \item Form a hexagon with the six vertices that are permutations of the coordinates of $(a+b,b,0)$, connected in such a way that the resulting hexagon is convex. (This hexagon will be parallel to the plane $x+y+z=0$.) We count with multiplicity $1$ every representation on the perimeter of this hexagon, with multiplicity $2$ every point that is one coordinate orthogonally from the perimeter, and so on until we reach an inner layer whose points form the perimeter of a triangle; then all points remaining on this perimeter and inside the triangle all receive the same multiplicity.
        \item Now, form a smaller hexagon whose vertices are $(a+b-1,b-1,0)$ and all its other coordinate permutations. (It is possible that some of these values will be the same, so that there are only three permutations; in that case, we just construct the triangle with those vertices.)  We now double our multiplicity count by giving those entries on the perimeter multiplicity $2$, those on the first layer inside multiplicity $4$, and so on until reaching an interior triangle all of whose points get the same multiplicity.  
        \item Iterating, we keep forming smaller hexagons (or possibly triangles) with vertices $(a+b-k,b-k,0)$ and its coordinate permutations as long as $b-k \geq 0$.  The perimeter of the level $k$ hexagon gets multiplicity $k+1$, the first layer inside the perimeter gets multiplicity $2(k+1)$, and so on, until a layer is reached that is a triangle, after which all points receives the same multiplicity. 
    \end{itemize} 
Furthermore, we claim that 
$$\nores \Pi_{0,0,c} = \bigoplus_{k=0}^c \bigoplus_{(\tau_1,\tau_2,\tau_3) \in T_k} \pi_{c-\tau_1,c-\tau_2,c-\tau_3}.$$
At present we do not see a general formula for $\nores \Pi_{a,b,c}$, nor can we write down an explicit formula for the branching rules of Types (III) with general $r\geq 3$.

\subsubsection*{Type (V)}

One can try to understand $\Ree$ by understanding $\Rees$, for there are the embeddings $D_4$ into $B_4$, and $B_4$ into $F_4$. The highest weight representations of the root systems $F_4,B_4,D_4$ are parameterized by four integers in terms of fundamental weights, and we denote their highest weight representations by $\Pi,\rho,\pi$ respectively. In the book \cite{mpr} the restrictions from $F_4$ to $B_4$ are listed for small highest weights. This suggests the following two branching rules for $\Rees$ as we vary the weights corresponding to the first and last vertices of the $F_4$ Dynkin diagram:
    \begin{align*}
    \Ree \Pi_{k,0,0,0}&= \bigoplus_{s+t=k} \rho_{0,s,0,t}, \\
    \Ree \Pi_{0,0,0,k}&= \bigoplus_{0\leq s+t\leq k} \rho_{s,0,0,t}.
    \end{align*}
We now refer to the Gelfand-Tsetlin pattern of Type (II) stated in Section \ref{srssec}. Following the notations of that section, the correspondence between $\Pi^r,\pi^r$ and $\rho,\pi$, for the root systems $B_4,D_4$ respectively, can be stated as follow:
    \begin{align*}
    \Pi^r_{f_1,f_2,f_3,f_4} &= \rho_{f_1-f_2,f_2-f_3,f_3-f_4,2f_4}, \\
    \pi^r_{g_1,g_2,g_3,g_4} &= \pi_{g_1-g_2,g_2-g_3,g_3-g_4,g_3+g_4}.
    \end{align*}
Using this, it is not hard to see that
    \begin{align*}
    \Reese \rho_{0,s,0,t} &= \bigoplus_{\substack{s'+s''= s \\ 0\leq t'\leq t}} \pi_{s',s'',t',t-t'}, \\
    \Reese \rho_{s,0,0,t} &= \bigoplus_{\substack{0\leq s'\leq s \\ t'+t''=t}} \pi_{s',0,t',t''}.
    \end{align*}
Therefore
    \begin{align*}
    \Ree \Pi_{k,0,0,0}&= \bigoplus_{0\leq s'+s''+t'\leq k} \pi_{s',s'',t',k-s'-s''-t'}, \\
    \Ree \Pi_{0,0,0,k}&= \bigoplus_{0\leq s'+t'+t''\leq k} (k+1-s'-t'-t'') \pi_{s',0,t',t''}.
    \end{align*}

\begin{exmp}
We list here four examples of $\Reese$. The last example is not included as a special case of the two formulas presented above.
    \begin{align*}
    \Ree \Pi_{0,0,0,1} &= 2 \pi_{0,0,0,0} \oplus (\pi_{1,0,0,0} \oplus \pi_{0,0,1,0} \oplus\pi_{0,0,0,1}) \\
    \Ree \Pi_{1,0,0,0} &= \pi_{0,1,0,0}\oplus (\pi_{1,0,0,0} \oplus \pi_{0,0,1,0}\oplus \pi_{0,0,0,1}) \\
    \Ree \Pi_{2,0,0,0} &= 
        \begin{aligned}[t]
        \pi_{0,2,0,0}&\oplus (\pi_{2,0,0,0} \oplus \pi_{0,0,2,0}\oplus \pi_{0,0,0,2})\oplus (\pi_{1,1,0,0}\oplus \pi_{0,1,1,0} \oplus \pi_{0,1,0,1}) \\
        &\oplus (\pi_{1,0,1,0}\oplus \pi_{1,0,0,1}\oplus \pi_{0,0,1,1})
        \end{aligned}\\
    \Ree \Pi_{0,0,1,0} &=2 \pi_{0,1,0,0}\oplus  2 (\pi_{1,0,0,0} \oplus \pi_{0,0,1,0}\oplus \pi_{0,0,0,1})\oplus (\pi_{1,0,1,0}\oplus \pi_{1,0,0,1}\oplus \pi_{0,0,1,1}) \oplus \pi_{0,0,0,0} 
    \end{align*}
Note that the branching rule in the above examples possesses the following symmetry: if $\pi_{a,b,c,d}$ appears with multiplicity $m$, then so does $\pi_{\sigma(a),b,\sigma(c),\sigma(d)}$ for all $\sigma\in S_3$. 
\end{exmp}

\section{Acknowledgments}

The authors would like to thank Tao Song for many helpful discussions, and George Wang, for pointing out a way to simplify the proof of Theorem 8.


\begin{thebibliography}{999}

\bibitem{GT1} Israel M. Gelfand and Michael L. Tsetlin. Finite dimensional representations of the group of unimodular matrices, \textit{Doklady AN SSSR} {\bf 71} (1950), 825--828; Finite dimensional representations of the
group of orthogonal matrices, \textit{Doklady AN SSSR} {\bf 71} (1950), 1017--1020. (In Russian.) See
also Israel M. Gelfand, Collected papers, Vol. 2. \textit{Springer-Verlag}, 1988.

\bibitem{hum} James Humphreys. Introduction to Lie Algebras and Representation Theory. \textit{Springer-Verlag}, 1972.

\bibitem{AAK} Alexander A. Kirillov.  A remark on the Gelfand-Tsetlin patterns for symplectic groups, \textit{J.  Geom. Phys.} {\bf 5} (1988), 473--482.
See also the book ``Lectures on the orbit method'', \textit{American Mathematical Society}, 2006.

\bibitem{1} Alexander Kirillov Jr. An Introduction to Lie Groups and Lie Algebras. \textit{Cambridge University Press}, 2008.

\bibitem{2} Sabine Lang. Almost irreducible subalgebras of simple Lie algebras. Master's thesis (2015), \'Ecole Polytechnique F\'ed\'erale de Lausanne.

\bibitem{3} Vladimir D. Lyakhovsky, Alexey A. Nazarov, and Polina I. Kakin. Splints of root systems for special Lie subalgebras. \textit{Theor. Math. Phys.} {\bf 185} (2015): 1471--1480.

\bibitem{mpr} Wendy G. McKay, Jiri Patera, and David W.Rand, Table of representations of simple Lie algebras, Volume I: Exceptional simple Lie algebras. \textit{Publ. CRM Montreal}, 1990.

\bibitem{5} David Richter. Splints of classical root systems. \textit{J. Geom.} (2012), {\bf 103}: 103--117.

\bibitem{VS} Vadim V. Shtepin. Separation of multiple points of the spectrum in reduction $\mathfrak{sp}(2n)\downarrow \mathfrak{sp}(2n-2)$, \textit{Funct. Anal. Appl.} {\bf 20} (1986), 336--338.

\bibitem{6} Richard P. Stanley. Enumerative Combinatorics Volume 2. \textit{Cambridge University Press}, 1999.

\end{thebibliography}
\end{document}